\numberwithin{equation}{section}
\definecolor{ao(english)}{rgb}{0.0, 0.0, 1.0}
    \newcommand{\manjil}[1]{\textcolor{blue}{#1}}
\newcommand\mycom[2]{\genfrac{}{}{0pt}{}{#1}{#2}}
\newcommand{\op}{\overline{p}}
\newcommand{\opt}{\overline{OPT}}
\newtheorem{theorem}{Theorem}[section]
\newtheorem{conjecture}{Conjecture}[section]
\newtheorem{definition}{Definition}[section]
\newtheorem{lemma}{Lemma}[section]
\newtheorem{remark}{Remark}[section]
\title[Arithmetic Properties for Overpartition $k$-tuples with Odd Parts]{Arithmetic Properties modulo powers of 2 and 3 for Overpartition $k$-Tuples with Odd Parts}
\author[H. Das]{Hirakjyoti Das}
\address[H Das]{Department of Mathematics, B. Borooah College, Guwahati 781007, Assam, India}
\email{hirak@bborooahcollege.ac.in}
\author[M. P. Saikia]{Manjil P. Saikia}
\address[M. P. Saikia]{Mathematical and Physical Sciences division, School of Arts and Sciences, Ahmedabad University, Ahmedabad 380009, Gujarat, India}
\email{manjil@saikia.in}
\author[A. Sarma]{Abhishek Sarma}
\address[A. Sarma]{Department of Basic Sciences and Humanities, Assam Skill University,  Mangaldai 784125, Assam, India}
\email{abhitezu002@gmail.com}
\keywords{integer partitions, overpartitions, Ramanujan-type congruences, modular forms, arithmetic density}
\subjclass[2020]{11P81, 11P83}
\begin{document}

\begin{abstract}
    Recently, Drema and N. Saikia (2023) and M. P. Saikia, Sarma, and Sellers (2025) proved several congruences modulo powers of $2$ for overpartition triples with odd parts. In this paper we study further divisibility properties of overpartition $k$-tuples with odd parts using elementary means as well as properties of modular forms. In particular, we prove several congruences modulo multiples of $3$, and an infinite family of congruences modulo powers of $3$; we also prove some cases of a conjecture of Saikia, Sarma, and Sellers.
\end{abstract}

\maketitle

\section{Introduction}

A partition of a positive integer $n$ is a finite non-increasing sequence of positive integers $\lambda=(\lambda_1, \lambda_2, \ldots, \lambda_k)$ such that the parts $\lambda_i$'s sum up to $n$. For instance, $4,3+1,2+2,2+1+1$ and $1+1+1+1$ are the five partitions of $4$. The number of partitions of $n$ is denoted by $p(n)$, with the convention that $p(0)=1$, and its generating function found by Euler is given by
\[
\sum_{n\geq 0}p(n)q^n=\frac{1}{(q;q)_\infty},
\]
where
\[
(a;q)_\infty:=\prod_{i\geq 0}(1-aq^i), \quad |q|<1.
\] Throughout the paper, we will use the notation $f_{k} := (q^k;q^k)_{\infty}$.

The arithmetic properties of partitions have been studied for a long time and several beautiful congruences satisfied by the partition function have been found. This avenue of study has also trickled down to other classes of partitions. For instance, a widely studied class of partitions are the overpartitions, first introduced by Corteel and Lovejoy \cite{CorteelLovejoy}. An overpartition of a nonnegative integer $n$ is a non-increasing sequence of natural numbers whose sum is $n$, and where the first occurrence (or equivalently, the last occurrence) of a number may be overlined. The eight overpartitions of 3 are
\[3,\bar{3},2+1,\bar{2}+1,2+\bar{1},\bar{2}+\bar{1},1+1+1,~\text{and}~\bar{1}+1+1.\]The number of overpartitions of $n$ is denoted by $\op(n)$ and its generating function is given by
\[
\sum_{n\geq 0}\op(n)q^n=\frac{f_{2}}{f_{1}^2}.
\]

Generalizing the idea of overpartitions, we can define an overpartition $k$-tuple, as was done by Keister, Sellers and Vary \cite{KeisterSellersVary}. An overpartition $k$-tuple of $n$ is a $k$-tuple of overpartitions $(\pi_1, \pi_2, \ldots, \pi_k)$ such that the sum of the parts of the partitions $\pi_i$'s equal $n$. The generating function for the number of overpartition $k$-tuples of $n$, denoted by $\op_k(n)$ is given by
\[
\sum_{n\geq 0}\op_k(n)q^n=\frac{f_{2}^k}{f_{1}^{2k}}.
\]
If we restrict all our parts in such an overpartition $k$-tuple to be odd, then we have an overpartition $k$-tuple $(\xi_1, \xi_2, \ldots, \xi_k)$ of $n$ where all parts of the partitions $\xi_i$'s are odd. The generating function for the number of overpartition $k$-tuples of $n$ with odd parts, denoted by $\opt_k(n)$ is given by
\begin{equation}\label{opt}
    \sum_{n\geq 0}\opt_k(n)q^n=\frac{f_{2}^{3k}}{f_{1}^{2k}f_{4}^{k}}.
\end{equation}
The study of the arithmetic properties of the case $k=1$ was initiated by Hirschhorn and Sellers \cite{HSellers}, which led to a lot of follow-up work by other mathematicians (See, for example, \cite{zchen}, \cite{zMerca}). The case $k=2$ has also been studied, the interested reader can look at the work of Lin \cite{Lin} for some representative work. Kim \cite{zBkim} further studied the case using the theory of modular forms. The case $k=3$ here corresponds to overpartition tuples with odd parts, which were very recently studied by Drema and N. Saikia \cite{DremaSaikia} and M. P. Saikia, Sarma and Sellers \cite{SSS}.

The work by Drema and Saikia \cite{DremaSaikia} focused mostly on finding congruences modulo small powers of $2$ for $\opt_3(n)$. The work of Saikia, Sarma and Sellers \cite{SSS} focused mostly on finding arithmetic properties of $\opt_k(n)$ for $k=3, 4$ and odd values modulo powers of $2$. In the present paper, we extend the study of the arithmetic properties of these functions. In particular, we prove some congruences modulo multiples of $3$ which have not appeared earlier. We also prove an infinite family of congruences modulo powers of $3$. However first, we state an unexpected equality below.
\begin{theorem}\label{thm:equality}
     For all $n\geq 1$, we have
     \begin{align*}
         \opt_4(2n)&=2\cdot \opt_8(n).
     \end{align*}
\end{theorem}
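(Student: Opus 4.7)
The plan is to exploit a clean multiplicative involution of the generating function under $q \mapsto -q$. Setting $A(q) := f_2^3/(f_1^2 f_4)$ so that $\sum_{n\geq 0}\opt_k(n)\,q^n = A(q)^k$, the standard transformations $f_1(-q) = f_2^3/(f_1 f_4)$, $f_2(-q) = f_2$, and $f_4(-q) = f_4$ give $A(-q) = f_1^2 f_4/f_2^3$, and therefore
\[
A(q)\,A(-q) = 1.
\]
Writing $\opt_k(q) := \sum_{n\geq 0} \opt_k(n)\,q^n$, this upgrades to $\opt_k(q)\opt_k(-q) = 1$ for every $k\geq 1$. This involution will do essentially all the work.

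Next I reformulate the theorem. Extracting even-indexed coefficients, the claim $\opt_4(2n) = 2\opt_8(n)$ for $n\geq 1$ is equivalent to the generating-function identity
\[
\opt_4(q) + \opt_4(-q) = 4\,\opt_8(q^2) - 2
\]
(the constant terms match as $2=2$; for $n\geq 1$ the $q^{2n}$-coefficients are $2\opt_4(2n)$ and $4\opt_8(n)$, respectively). Using $\opt_4(q) = \opt_2(q)^2$ together with $\opt_2(q)\opt_2(-q) = 1$,
\[
\opt_4(q) + \opt_4(-q) = \opt_2(q)^2 + \opt_2(-q)^2 = \bigl(\opt_2(q)+\opt_2(-q)\bigr)^2 - 2,
\]
while $\opt_8(q^2) = \opt_2(q^2)^4 = \opt_4(q^2)^2$. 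Hence the theorem reduces to the single identity $\opt_2(q) + \opt_2(-q) = 2\,\opt_4(q^2)$.

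To close, I would invoke Ramanujan's theta function $\varphi$. A direct $f$-quotient comparison gives $\opt_2(q) = \varphi(q)/\varphi(-q)$, using $\varphi(q) = f_2^5/(f_1^2 f_4^2)$ and $\varphi(-q) = f_1^2/f_2$; then
\[
\opt_2(q) + \opt_2(-q) = \frac{\varphi(q)^2 + \varphi(-q)^2}{\varphi(q)\,\varphi(-q)}.
\]
Jacobi's identity $\varphi(q)\varphi(-q) = \varphi(-q^2)^2$ together with the classical $2$-dissection $\varphi(q)^2 + \varphi(-q)^2 = 2\varphi(q^2)^2$ collapse this to
\[
\frac{2\,\varphi(q^2)^2}{\varphi(-q^2)^2} = 2\,\opt_2(q^2)^2 = 2\,\opt_4(q^2),
\]
as required. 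There is no substantive obstacle beyond spotting the initial involution $\opt_k(q)\opt_k(-q) = 1$; once it is in hand, every remaining step is either a one-line algebraic rearrangement or a well-known theta identity.
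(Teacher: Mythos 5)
Your proof is correct, and it takes a genuinely different route from the paper's. The paper works entirely with $2$-dissections of $f$-quotients: it squares the $2$-dissection of $1/f_1^4$, applies it once as-is and once after $q\mapsto -q$, subtracts to obtain the auxiliary identity $\dfrac{f_2^{24}}{f_1^{16}f_4^8} = 1 + 16q\,\dfrac{f_4^8}{f_1^8}$, and feeds that back into the even-part extraction. Your argument instead isolates the structural fact that the base series $A(q)=f_2^3/(f_1^2 f_4)$ is a unit under $q\mapsto -q$, namely $A(q)A(-q)=1$, uses it to rewrite $\opt_4(q)+\opt_4(-q)$ as $\bigl(\opt_2(q)+\opt_2(-q)\bigr)^2-2$, and thereby reduces the theorem to the clean identity $\opt_2(q)+\opt_2(-q)=2\,\opt_4(q^2)$, which you then settle by recognizing $\opt_2(q)=\varphi(q)/\varphi(-q)$ and invoking the classical theta facts $\varphi(q)\varphi(-q)=\varphi(-q^2)^2$ and $\varphi(q)^2+\varphi(-q)^2=2\varphi(q^2)^2$. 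Both proofs ultimately live on the $q\mapsto -q$ symmetry of the generating function (the paper uses it implicitly when it produces its second dissection identity), but you make the mechanism explicit. What your route buys is conceptual transparency and a reusable observation ($\opt_k(q)\opt_k(-q)=1$ for every $k$), plus a pleasantly short theta-function finish; what the paper's route buys is self-containment within the $2$-dissection lemmas it already sets up for the rest of the article, at the cost of a less illuminating algebraic subtraction step.
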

\noindent We give a simple proof of the equality in Section \ref{Proof-thm:equality}. Now, we move ahead to some congruences satisfied by $\opt_3(n)$.

\begin{theorem}\label{thm:3}
    For all $n\geq 0$, we have
    \begin{align}
    \opt_3(3n+1)&\equiv 0 \pmod{6},\label{eq9}\\
    \opt_3(12n+7)&\equiv 0 \pmod{12},\label{eq10}\\
 \opt_3(12n+10)&\equiv 0 \pmod{12},\label{eq11}\\
    \opt_3(3n+2)&\equiv 0 \pmod{18},\label{eq12}\\
        \opt_3(6n+5)&\equiv 0\pmod{36} \label{eq15},\\
    \opt_3(24n+23)&\equiv 0\pmod{144}\label{24n+23}.
    \end{align}
\end{theorem}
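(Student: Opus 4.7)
The plan is to split each congruence into its $2$-adic and $3$-adic parts and handle them separately. The mod-$2$ factor in all six statements is immediate from
\[
\frac{f_2^9}{f_1^6 f_4^3}\equiv \frac{f_2^6}{f_4^3}\equiv 1\pmod 2
\]
(using $f_1^2\equiv f_2$ and $f_2^2\equiv f_4\pmod 2$), which already yields $\opt_3(n)\equiv 0\pmod 2$ for all $n\geq 1$. The higher $2$-adic pieces (mod $4$ for \eqref{eq10}, \eqref{eq11}, \eqref{eq15} and mod $16$ for \eqref{24n+23}) are obtained by iterated $2$-dissection of $G(q):=\sum_n \opt_3(n)q^n$ and may in several cases be cited directly from \cite{SSS}. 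For the mod-$3$ part of \eqref{eq9}--\eqref{eq11}, applying $f_k^3\equiv f_{3k}\pmod 3$ gives
\[
G(q)=\frac{(f_2^3)^3}{(f_1^3)^2 f_4^3}\equiv \frac{f_{18}}{f_3^2 f_{12}}\pmod 3,
\]
which is a power series in $q^3$ alone, so $\opt_3(3n+1)\equiv \opt_3(3n+2)\equiv 0\pmod 3$.

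The central task is the mod-$9$ assertion, which in each of \eqref{eq12}, \eqref{eq15}, \eqref{24n+23} reduces to $\opt_3(3n+2)\equiv 0\pmod 9$ (note $24n+23\equiv 2\pmod 3$). To prove this I will use Jacobi's identity $f_1^3=\sum_{k\geq 0}(-1)^k(2k+1)q^{k(k+1)/2}$: the triangular number $T_k=k(k+1)/2$ satisfies $T_k\equiv 1\pmod 3$ exactly when $k\equiv 1\pmod 3$, and for such $k$ the coefficient $2k+1$ is a multiple of $3$. Splitting Jacobi's sum by $k\pmod 3$ therefore produces the $3$-dissection
\[
f_1^3=F(q^3)-3q\,f_9^3,
\]
with $F(q^3)$ a power series in $q^3$ satisfying $F(0)=1$; the strictly analogous identities $f_2^3=H(q^3)-3q^2 f_{18}^3$ and $f_4^3=I(q^3)-3q^4 f_{36}^3$ follow by the same argument applied at $q^2$ and $q^4$.

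Starting from $G\cdot f_1^6 f_4^3=f_2^9$ and expanding via the above identities, I would write $f_1^6 f_4^3=\alpha_0(q^3)+q\alpha_1(q^3)+q^2\alpha_2(q^3)$ and $f_2^9=\beta_0(q^3)+q\beta_1(q^3)+q^2\beta_2(q^3)$. A direct expansion of $(f_1^3)^2 f_4^3$ and $(f_2^3)^3$ then shows that $\alpha_0$ has constant term $1$, while $\alpha_1\equiv 0\pmod 3$, $\alpha_2\equiv 0\pmod 9$, and $\beta_2\equiv 0\pmod 9$. Decomposing $G(q)=G_0(q^3)+qG_1(q^3)+q^2G_2(q^3)$ and reading off the $q^{3n+2}$-slot of $Gf_1^6 f_4^3=f_2^9$ gives
\[
G_2\alpha_0+G_1\alpha_1+G_0\alpha_2=\beta_2.
\]
From the previous paragraph we already have $G_1\equiv 0\pmod 3$, so each summand except $G_2\alpha_0$ is divisible by $9$; hence $G_2\alpha_0\equiv 0\pmod 9$, and the invertibility of $\alpha_0$ modulo $9$ (its constant term is $1$) yields $G_2\equiv 0\pmod 9$, as required.

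The main obstacle is the 3-dissection of the product $(f_1^3)^2 f_4^3$ and the careful bookkeeping needed to show that $\alpha_2$ picks up two factors of $3$ while $\alpha_1$ retains only one; once this divisibility is in hand, the inversion step and final extraction are purely formal. The remaining mod-$4$ and mod-$16$ statements for \eqref{eq10}, \eqref{eq11}, \eqref{eq15}, \eqref{24n+23} will be handled in parallel by similar (but simpler) $2$-dissection arguments together with results from \cite{SSS}.
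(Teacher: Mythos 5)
Your overall plan — isolate the $2$-adic and $3$-adic parts, get the $3$-part from $f_k^3\equiv f_{3k}\pmod 3$, and get the $9$-part from a $3$-dissection of $f_1^3$ — is correct, and your treatment of the $9$-part is genuinely different from the paper's. The paper first reduces the generating function modulo $9$ using $f_1^9\equiv f_3^3$, $f_2^9\equiv f_6^3$, $f_4^9\equiv f_{12}^3\pmod 9$ to rewrite
\[
\sum_{n\geq0}\opt_3(n)q^n\equiv\frac{f_6^3}{f_3^3 f_{12}^3}\,f_1^3\,(f_4^3)^2\pmod 9,
\]
then substitutes the explicit $3$-dissection \eqref{disf1^3} for $f_1^3$ and $f_4^3$, extracts the $q^{3n+2}$ terms, and observes by inspection that every resulting coefficient is a multiple of $9$. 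You instead clear denominators to the exact identity $G\cdot f_1^6 f_4^3=f_2^9$, $3$-dissect both sides using only the coarse information from Jacobi's identity (namely that the $q^{3n+1}$ piece of $f_1^3$ is divisible by $3$ and the $q^{3n+2}$ piece vanishes, with the analogous facts at $q^2$ and $q^4$), read off the residue-$2$ component
\[
G_2\alpha_0+G_1\alpha_1+G_0\alpha_2=\beta_2,
\]
and conclude $G_2\equiv0\pmod 9$ from $\alpha_1\equiv0\pmod3$, $G_1\equiv0\pmod3$, $\alpha_2\equiv0\pmod9$, $\beta_2\equiv0\pmod9$, and the invertibility of $\alpha_0$ in $(\mathbb{Z}/9\mathbb{Z})[[q]]$ (its constant term is $1$). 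I checked this computation: writing $f_1^6=F^2-6qFf_9^3+9q^2f_9^6$ and $f_4^3=F(q^{12})-3q^4f_{36}^3$ one indeed finds $\alpha_1\equiv0\pmod3$, $\alpha_2\equiv0\pmod9$, and from $f_2^9=(F(q^6)-3q^2f_{18}^3)^3$ that $\beta_2=-9F(q^6)^2f_{18}^3$; the argument goes through. What your route buys is that you never need the full identities \eqref{disf1^3}, \eqref{3df1^3}, \eqref{3d1/f1^3}, nor the preliminary $\pmod 9$ reduction of the generating function — just Jacobi's theorem and a power-series inversion; the paper's version is more computational but produces an explicit formula for the $q^{3n+2}$ slot.

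One caveat: you have only sketched the mod $4$ and mod $16$ components needed for \eqref{eq10}, \eqref{eq11}, \eqref{eq15}, \eqref{24n+23}, deferring to iterated $2$-dissections and citations of \cite{SSS}. That is in fact the same plan the paper executes (it cites \cite[Theorem 1]{SSS} for $\opt_3(4m+3)\equiv0\pmod4$, and carries out $2$-dissections using \eqref{disf1^2}--\eqref{dis1byf1^4} for the remaining cases, including a two-stage dissection reaching $\opt_3(8n+7)\equiv0\pmod{16}$), so the plan is sound, but as written your proof of the $2$-power pieces is incomplete.
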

\noindent The proof of Theorem \ref{thm:3} is via elementary techniques and is given in Section \ref{sec:el2}. The above list is far from exhaustive. In the next theorem, we state a family of congruences for $\opt_3(n)$ modulo $4$.
\begin{theorem}\label{singlemodcong}
For all $n\geq0$, primes $p\ge 5$, quadratic non-residues $r$ modulo $p$, and $A\in\{0,1,2\}$ such that $Ap\equiv 2r+1\pmod{3}$, we have
    \begin{align}
    \opt_3(3pn+R)&\equiv 0\pmod{4},\label{Family 1 mod 4}
\end{align}
where
\[R=\begin{cases}
    2(Ap+r) & \quad \text{if}~~ 2(Ap+r)<3p,\\
     2(Ap+r)-3p & \quad \text{if}~~ 2(Ap+r)>3p.
\end{cases}\]
\end{theorem}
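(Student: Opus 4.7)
The plan is to reduce the generating function for $\opt_3(n)$ modulo $4$ to a theta product, and then exploit the sparsity of nonzero coefficients in that product. First, I will use the standard $2$-adic congruence $f_1^4\equiv f_2^2\pmod{4}$ (which follows from $(1-q)^4-(1-q^2)^2=-4q(1-q)^2$, working in $\mathbb{Z}[[q]]/4\mathbb{Z}[[q]]$ where $f_1^6 f_4^3$ is a unit). Applied to \eqref{opt} with $k=3$, this yields
\[
\sum_{n\ge 0}\opt_3(n)q^n=\frac{f_2^9}{f_1^6 f_4^3}\equiv \frac{f_2^7}{f_1^2 f_4^3}=\frac{f_2^5}{f_1^2 f_4^2}\cdot\frac{f_2^2}{f_4}=\varphi(q)\,\varphi(-q^2)\pmod{4},
\]
where $\varphi(q)=\sum_{a\in\mathbb{Z}}q^{a^2}$ is Ramanujan's theta function and I am using the standard eta-quotient formulas $\varphi(q)=f_2^5/(f_1^2 f_4^2)$ and $\varphi(-q^2)=f_2^2/f_4$.

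Next, I will expand the product as a double sum, so that the coefficient of $q^n$ in $\varphi(q)\varphi(-q^2)$ equals $\sum_{a^2+2b^2=n}(-1)^b$, summed over $(a,b)\in\mathbb{Z}^2$. Grouping representations under the symmetries $(a,b)\mapsto(\pm a,\pm b)$, every orbit with both $a\ne 0$ and $b\ne 0$ has size $4$ with common sign $(-1)^b$, contributing $0$ modulo $4$. The only surviving contributions come from the ``axis'' representations $n=a^2$ (when $b=0$) or $n=2b^2$ (when $a=0$). Consequently,
\[
\opt_3(n)\equiv 0\pmod{4}\quad\text{whenever $n$ is neither a square nor twice a square.}
\]

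It then suffices to verify that $m:=3pn+R$ avoids both forms, which I will accomplish with two modular checks. Modulo $3$, a short calculation using the hypothesis $Ap\equiv 2r+1\pmod{3}$ shows that $R\equiv 2(Ap+r)\equiv 2\pmod{3}$ in each subcase (note that subtracting $3p$ does not change the residue mod $3$), so $m\equiv 2\pmod 3$; since squares modulo $3$ lie in $\{0,1\}$, $m$ is not a perfect square. Modulo $p$, both subcases give $R\equiv 2r\pmod p$, hence $m\equiv 2r\pmod p$; if $m=2b^2$ then $2b^2\equiv 2r\pmod p$, and since $p\ge 5$ we may cancel the $2$ to obtain $b^2\equiv r\pmod p$, contradicting the fact that $r$ is a quadratic nonresidue modulo $p$. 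Combining these two observations with the theta-function reduction yields $\opt_3(3pn+R)\equiv 0\pmod 4$.

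The main obstacle is locating the correct theta-product representation modulo $4$ and justifying the power-series manipulation $1/f_1^4\equiv 1/f_2^2\pmod 4$ carefully; once that identification is made, the sum-of-squares interpretation reduces the problem to the elementary congruence arithmetic outlined above, and the roles of the hypotheses ($p\ge 5$ prime, $r$ a nonresidue, and the condition $Ap\equiv 2r+1\pmod 3$) become transparent.
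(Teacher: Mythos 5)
Your proposal is correct and closely parallels the paper's strategy, but it is more self-contained in one place and cleaner in another. The paper obtains the key reduction, namely that $\opt_3(n)\equiv 0\pmod 4$ for $n\geq 1$ exactly when $n$ is neither a square nor twice a square, by citing two earlier results: a theorem of Saikia--Sarma--Sellers relating $\opt_3$ to $\opt_1$ mod $4$, together with Hirschhorn--Sellers' characterization of $\opt_1(n)\pmod 4$ (Lemma \ref{lem2} in the paper). You instead rederive this from scratch by reducing the generating function mod $4$ to $\varphi(q)\varphi(-q^2)$ via $f_1^4\equiv f_2^2\pmod 4$ and using the orbit-counting argument on $\sum_{a^2+2b^2=n}(-1)^b$; this is a valid and transparent alternative (and is essentially the standard way the Hirschhorn--Sellers result itself is proved). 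For the arithmetic step, both you and the paper rule out $3pn+R$ being a square by reducing mod $3$ to get residue $2$; to rule out twice a square, the paper splits into parity cases on $n$ and shows $3pn+R$ is either odd or of the form $2(p(3m+A)+r)$ with $p(3m+A)+r\equiv r$ a nonresidue mod $p$, whereas you go directly: $3pn+R\equiv 2r\pmod p$ (in both branches of the definition of $R$), so $2b^2\equiv 2r$ would force $b^2\equiv r\pmod p$, impossible since $r$ is a nonresidue (and in particular $r\not\equiv 0$, so $b\equiv 0\pmod p$ gives no escape). Your version avoids the parity split and is slightly shorter; the two arguments are otherwise equivalent in content. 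One small point worth making explicit in your write-up: you should note that $3pn+R>0$ (which holds since $R>0$ in both cases), so $n=0$ does not cause the trivial representation $0=0^2$ to interfere with your orbit argument.
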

\noindent We give an elementary proof of this result in Section \ref{sec:radu1}.

Saikia, Sarma and Sellers \cite{SSS} prove several results for $\opt_k(n)$, depending on whether $k$ is even or odd. For instance, one of their result is the following theorem.
\begin{theorem}\cite[Theorem 6]{SSS}\label{thm:k2}
    Let $k=(2^m)r$ with $m>0$ and $r$ odd. Then for all $n\geq 0$ we have
    \[
    \opt_k(n)\equiv 0 \pmod{2^{m+1}}.
    \]
\end{theorem}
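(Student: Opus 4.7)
The plan is to show that the generating function in \eqref{opt} satisfies
\[
\frac{f_{2}^{3k}}{f_{1}^{2k}f_{4}^{k}}\equiv 1\pmod{2^{m+1}}
\]
as an identity in $\mathbb{Z}[[q]]$, from which the conclusion follows by comparing coefficients of $q^{n}$ for $n\geq 1$ (and noting that $\opt_k(0)=1$).

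The key ingredient is the familiar Euler-product congruence modulo powers of $2$, which I would establish by induction on $s\geq 0$ in the form
\[
f_{a}^{2^{s+1}}\equiv f_{2a}^{2^{s}}\pmod{2^{s+1}}
\]
for any positive integer $a$. The base case $s=0$ is just $(1-q^{a})^{2}\equiv 1-q^{2a}\pmod{2}$. For the inductive step, if $f_{a}^{2^{s+1}}=f_{2a}^{2^{s}}+2^{s+1}H$ for some $H\in\mathbb{Z}[[q]]$, squaring gives
\[
f_{a}^{2^{s+2}}=f_{2a}^{2^{s+1}}+2^{s+2}f_{2a}^{2^{s}}H+2^{2s+2}H^{2},
\]
and both error terms are divisible by $2^{s+2}$ because $2s+2\geq s+2$.

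Applying this with $s=m$ and $a=1$, then raising to the $r$-th power (and invoking the elementary fact that $P\equiv Q\pmod{N}$ implies $P^{r}\equiv Q^{r}\pmod{N}$), I obtain $f_{1}^{2k}\equiv f_{2}^{k}\pmod{2^{m+1}}$. Applying it with $s=m$ and $a=2$, then raising to the $r$-th power, I obtain $f_{2}^{2k}\equiv f_{4}^{k}\pmod{2^{m+1}}$. Combining these,
\[
\frac{f_{2}^{3k}}{f_{1}^{2k}f_{4}^{k}}\equiv \frac{f_{2}^{3k}}{f_{2}^{k}f_{4}^{k}}=\frac{f_{2}^{2k}}{f_{4}^{k}}\equiv \frac{f_{4}^{k}}{f_{4}^{k}}=1\pmod{2^{m+1}}.
\]

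The only technical subtlety, which I expect to be the sole mild obstacle, is that two of the above steps replace a series by a congruent one inside a denominator. This is justified by the general observation that for $P,Q\in 1+q\mathbb{Z}[[q]]$ with $P\equiv Q\pmod{N}$, one has
\[
\frac{1}{P}-\frac{1}{Q}=\frac{Q-P}{PQ}\in N\mathbb{Z}[[q]],
\]
since $1/(PQ)\in\mathbb{Z}[[q]]$ (as $PQ$ has constant term $1$), so $1/P\equiv 1/Q\pmod{N}$. Used twice, this legitimises the two denominator substitutions above. Everything else is pure bookkeeping, and the proof is complete once the coefficient of $q^{n}$ is read off for $n\geq 1$.
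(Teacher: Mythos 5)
The paper imports this theorem from \cite{SSS} and does not reprove it, so there is no in-paper argument to compare against; your proof has to be judged on its own merits, and it is correct. The induction establishing $f_a^{2^{s+1}}\equiv f_{2a}^{2^s}\pmod{2^{s+1}}$ is accurate, raising both sides to the odd power $r$ preserves the congruence, and the observation that $1/P\equiv 1/Q\pmod{N}$ whenever $P\equiv Q\pmod{N}$ are units of $\mathbb{Z}[[q]]$ correctly handles the denominator substitutions, so the generating function collapses to $1$ modulo $2^{m+1}$. Your route via the classical Euler-product congruence is cleaner and shorter than the machinery this paper deploys elsewhere for congruences in the same family, namely the $2$-dissection \eqref{dis1byf1^2} expanded by the binomial theorem together with the divisibility estimate of Lemma~\ref{lem1}, which drive the proofs of Theorem~\ref{thm:conj} and Lemma~\ref{lem:new}. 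That heavier approach buys more: Lemma~\ref{lem:new} pins down $\opt_{2^m r}(n)$ modulo $2^{m+2}$ and identifies exactly when $2^{m+1}$ is best possible, whereas your argument gives only the divisibility by $2^{m+1}$ asked for in Theorem~\ref{thm:k2}. You are also right that the conclusion requires $n\geq 1$: since $\opt_k(0)=1$, the ``$n\geq 0$'' in the theorem statement is a slip.
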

\noindent They \cite{SSS} conjecture at the end of their paper a more general result for even values of $k$, which is given below.
\begin{conjecture}\cite[Conjecture 1]{SSS}\label{conj2i}
    For all $i\geq1$, $n\geq 0$ and odd $r$, we have
    \begin{align*}
        \opt_{2^ir}(8n+1)&\equiv0\pmod{2^{i+1}},\\
        \opt_{2^ir}(8n+2)&\equiv0\pmod{2^{2i+1}},\\
        \opt_{2^ir}(8n+3)&\equiv0\pmod{2^{i+3}},\\
        \opt_{2^ir}(8n+4)&\equiv0\pmod{2^{2i+4}},\\
        \opt_{2^ir}(8n+5)&\equiv0\pmod{2^{i+2}},\\
        \opt_{2^ir}(8n+6)&\equiv0\pmod{2^{2i+3}},\\
        \opt_{2^ir}(8n+7)&\equiv0\pmod{2^{i+4}}.
    \end{align*}
\end{conjecture}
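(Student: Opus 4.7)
I would proceed by induction on $i \ge 1$, using two complementary identities for the generating function $F_k(q) := \sum_{n \ge 0} \opt_k(n) q^n$. The squaring identity $F_{2k}(q) = F_k(q)^2$ is immediate from \eqref{opt}, and the reflection identity $F_k(-q) = F_k(q)^{-1}$ follows from the classical evaluation $f_1|_{q \mapsto -q} = f_2^3/(f_1 f_4)$. Combined, they yield the ``even-part symmetry'' $\sum_{a+b=N,\, a \text{ even}} \opt_k(a) \opt_k(b) = \sum_{a+b=N,\, a \text{ odd}} \opt_k(a) \opt_k(b)$ for $N \ge 1$, hence the two convolution formulas
\[
\opt_{2^{i+1}r}(N) = 2\opt_{2^i r}(N) + 2 \!\!\sum_{\substack{2 \le a \le N-1 \\ a \text{ even}}} \opt_{2^i r}(a) \opt_{2^i r}(N-a) \qquad (N \text{ odd}),
\]
\[
\opt_{2^{i+1}r}(N) = 4\opt_{2^i r}(N) + 2 \!\!\sum_{\substack{2 \le a \le N-2 \\ a \text{ even}}} \opt_{2^i r}(a) \opt_{2^i r}(N-a) \qquad (N \text{ even}).
\]
The $4$-vs-$2$ dichotomy of the main terms accounts precisely for the $+2$-vs-$+1$ dichotomy of the exponents in Conjecture \ref{conj2i} for even versus odd residues.

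For the inductive step I split on $j = N \bmod 8 \in \{1, \ldots, 7\}$. The main term, $2 \opt_{2^i r}(N)$ or $4 \opt_{2^i r}(N)$, has $2$-adic valuation at least the target exponent $e_{i+1}(j)$ by the inductive hypothesis. Each interior term has $v_2 \ge 1 + v_2(\opt_{2^i r}(a)) + v_2(\opt_{2^i r}(N-a))$, and a short case analysis over the residue pair $(a \bmod 8,\, (N-a) \bmod 8)$---using the inductive hypothesis on nonzero residues and Theorem \ref{thm:k2} for $a \equiv 0 \pmod 8$---shows each interior term meets the target in all but one edge case. The base case $i = 1$ reduces, via $F_{2r}(q) = F_2(q)^r$ and the closed form
\[
F_2(q) = \varphi(q)/\varphi(-q) = (\varphi(q^4) + 2q\psi(q^8))^2 / \varphi(-q^2)^2,
\]
obtained from $\varphi(\pm q) = \varphi(q^4) \pm 2q \psi(q^8)$ and $\varphi(q) \varphi(-q) = \varphi(-q^2)^2$, to an explicit $8$-dissection of $F_2(q) \bmod 2^6$, to be verified by direct elementary calculation; the extension to arbitrary odd $r$ then follows from the multiplicative structure of $F_2(q)^r$.

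The principal obstacle is a cancellation phenomenon in the inductive step for $N \equiv 4 \pmod 8$ at $i = 1$: the main term $4 \opt_2(N)$ and the interior contribution each attain only $v_2 = 7$, whereas the target is $v_2 \ge 8$, so the required bound is achieved only because the two pieces do not cancel at the $2^7$ level. Explicitly, $\opt_4(4) = 4\opt_2(4) + 2 \opt_2(2)^2 = 128 + 128 = 256$: each piece has $v_2 = 7$ while the sum has $v_2 = 8$. A rigorous proof therefore needs to track not merely lower bounds on $v_2(\opt_{2^i r}(N))$ but also the leading bit $\opt_{2^i r}(N)/2^{e_i(j)} \bmod 2$, uniformly in the odd parameter $r$. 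A more powerful alternative that sidesteps this bookkeeping is to view each $F_{2^i r}(q)$ as a weight-zero $\eta$-quotient on $\Gamma_0(16 \cdot 2^i r)$ and invoke a Sturm-type bound; this route proves the conjecture for each fixed pair $(i, r)$ but must then be made uniform in $r$, which is itself nontrivial.
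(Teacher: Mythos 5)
This statement is a \emph{conjecture} that the paper quotes from \cite{SSS}; the paper explicitly does \emph{not} prove it in full and, in fact, asserts that it is not correct as stated. Immediately before Theorem~\ref{thm:radu} the authors write that ``the power of $2$ in the fourth congruence of Conjecture~\ref{conj2i} is corrected below,'' and the fourth line of Theorem~\ref{thm:radu} replaces $2^{2i+4}$ by $2^{2i+3}$. You can see the failure directly from data you already compute in your own proposal: $\opt_2(4)=32=2^5$, whereas the fourth congruence at $i=1,r=1,n=0$ would require $\opt_2(4)\equiv 0\pmod{2^{6}}$. Your remark that ``the main term $4\opt_2(N)$\dots attains only $v_2 = 7$'' (i.e.\ $v_2(\opt_2(4))=5$) contains exactly this counterexample, but you read it as an obstacle in the $i=1\to 2$ inductive step rather than as a refutation of the $i=1$ base case, so the plan as written would fail silently at its start. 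A proof attempt for the conjecture as stated must fail, because the statement is false.

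On the mathematical content: your reflection identity $F_k(-q)=F_k(q)^{-1}$, coming from $f_1\big|_{q\mapsto -q}=f_2^3/(f_1f_4)$, and the resulting even-part symmetry and convolution formulas, are correct and rather elegant. This is genuinely different machinery from what the paper uses for the cases it does settle. The paper proves the $8n+1$, $4n+3$ (hence $8n+3$), and $8n+5$ cases (Theorem~\ref{thm:conj}) by three separate elementary arguments: the first is immediate from Theorem~\ref{thm:k2}; the second is a double $2$-dissection of $1/f_1^2$ combined with the binomial-valuation Lemma~\ref{lem1}; the third follows from Lemma~\ref{lem:new}, a ``square / twice a square / four times a square'' dichotomy for $\opt_{2^m r}$ modulo $2^{m+2}$ proved by induction on $m$ from the Hirschhorn--Sellers dichotomy for $\opt_1$. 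For the even residues $8n+2,8n+4,8n+6$ the paper only proves finitely many instances ($1\le i\le 5$, $r\in\{1,3,5\}$) via Radu's algorithmic finite check (Theorem~\ref{thm:radu}), not a uniform argument. Even setting aside the false fourth case, the cancellation/leading-bit bookkeeping you correctly identify as the crux is precisely what keeps a convolution induction from closing, and the paper sidesteps it rather than resolving it. If you want to salvage your approach, target the corrected statement $\opt_{2^ir}(8n+4)\equiv 0\pmod{2^{2i+3}}$ and the other even-residue cases, and compare your convolution bookkeeping to the square-dichotomy of Lemma~\ref{lem:new}, which tracks exactly the kind of ``$2^{m+1}$ vs.\ $0$ modulo $2^{m+2}$'' leading-bit information you need, but at a modulus too small to reach the $2^{2i+\ast}$ targets.
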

\noindent We prove three cases of this conjecture here. In fact, we are able to give a better result for one case.
\begin{theorem}\label{thm:conj}
     For all $i\geq1$, $n\geq 0$ and odd $r$, we have
    \begin{align}
    \opt_{2^ir}(8n+1)&\equiv0\pmod{2^{i+1}},\label{conjp-1}\\
    \opt_{2^ir}(4n+3)&\equiv0\pmod{2^{i+3}},\label{conjp-2}\\
    \opt_{2^ir}(8n+5)&\equiv0\pmod{2^{i+2}}.\label{conjp-3}
    \end{align}
\end{theorem}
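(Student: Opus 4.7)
The plan: I would deduce \eqref{conjp-1} directly from Theorem~\ref{thm:k2}, which already gives $\opt_{2^ir}(n)\equiv 0\pmod{2^{i+1}}$ for every $n\geq 1$. For \eqref{conjp-2} and \eqref{conjp-3} I would analyse the generating function
\[
\sum_{n\geq 0}\opt_{2^ir}(n)q^n=\frac{f_2^{3\cdot 2^ir}}{f_1^{2^{i+1}r}f_4^{2^ir}}
\]
via Jacobi's identity $f_1^2=f_2\varphi(-q)=f_2(1+2v)$, where $v:=\sum_{n\geq 1}(-1)^n q^{n^2}$. Since every $j\geq 1$ term in the binomial expansion of $(1+2v)^{2^i}$ has $2$-adic valuation at least $i+1$, we may write $f_1^{2^{i+1}}=f_2^{2^i}(1+2^{i+1}W)$ for an integer power series $W=W_i$.

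Combining this with the Taylor expansion $(1+2^{i+1}W)^{-r}\equiv 1-r\cdot 2^{i+1}W\pmod{2^{i+3}}$ (valid for $i\geq 1$, since the next binomial term already carries $2^{2(i+1)}\geq 2^{i+3}$), the identity $f_2^2/f_4=\varphi(-q^2)=1+2U$ with $U:=\sum_{n\geq 1}(-1)^n q^{2n^2}$, and the observation that $\varphi(-q^2)^{2^i r}\equiv 1\pmod{2^{i+1}}$, a brief manipulation produces
\[
\sum_{n\geq 0}\opt_{2^ir}(n)q^n\equiv\varphi(-q^2)^{2^i r}-r\cdot 2^{i+1}W\pmod{2^{i+3}}.
\]
The cross-term $r\cdot 2^{i+1}(\varphi(-q^2)^{2^i r}-1)W$ is absorbed into the error because its $2$-adic valuation is already at least $2i+2\geq i+3$.

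Since $\varphi(-q^2)^{2^i r}$ is a series in $q^2$, the coefficient of $q^{4n+3}$ on the right reduces to $-r\cdot 2^{i+1}[q^{4n+3}]W$, so \eqref{conjp-2} is equivalent to $[q^{4n+3}](1+2v)^{2^i}\equiv 0\pmod{2^{i+3}}$. In the binomial sum $\sum_{j\geq 1}\binom{2^i}{j}2^j[q^{4n+3}]v^j$ the terms $j=1,2$ vanish because neither squares nor sums of two positive squares are $\equiv 3\pmod 4$; for $j=4$ one has $[q^{4n+3}]v^4\equiv 0\pmod 4$, because every representation $4n+3=a_1^2+a_2^2+a_3^2+a_4^2$ must use exactly one even and three odd parts, yielding a factor of $4$ from the position of the even entry, and since $\nu_2(\binom{2^i}{4}2^4)\geq i+2$ the $j=4$ summand is $\equiv 0\pmod{2^{i+4}}$; for $j=3$ or $j\geq 5$ the bound $\nu_2(\binom{2^i}{j}2^j)\geq i+(j-\nu_2(j))\geq i+3$ concludes the sum. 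A wholly analogous but coarser analysis modulo $2^{i+2}$ delivers \eqref{conjp-3}: at $q^{8n+5}$ the $v$-coefficient vanishes ($8n+5$ is not a square modulo $8$), $[q^{8n+5}]v^2$ is even (representations as $a^2+b^2$ pair off, since one of $a,b$ must be odd and the other even), and the higher-order binomial terms supply enough $2$-adic growth. The main obstacle is precisely the four-position symmetry argument forcing $[q^{4n+3}]v^4\equiv 0\pmod 4$; once this identity is in hand, the remaining work is routine bookkeeping with $2$-adic valuations of binomial coefficients.
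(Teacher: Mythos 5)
Your proposal is correct, and it takes a genuinely different route from the paper's. The paper establishes \eqref{conjp-1} exactly as you do (from Theorem~\ref{thm:k2}), but it proves \eqref{conjp-2} by invoking the $2$-dissection \eqref{dis1byf1^2} twice, producing a quadruple sum over binomial products $\binom{2^ir}{4k+2\alpha+1}\binom{2^ir}{2t+\beta}$, and then bounds the $2$-adic valuations via Lemma~\ref{lem1}. For \eqref{conjp-3} the paper proves a free-standing Lemma~\ref{lem:new} by induction on $m$, using the Hirschhorn--Sellers mod-$4$ and mod-$8$ facts about $\opt_1(n)$ (Lemmas~\ref{lem2} and~\ref{lem3}). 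Your argument instead substitutes $f_1^2 = f_2\varphi(-q) = f_2(1+2v)$ once, writes the generating function as $\varphi(-q^2)^{2^ir}(1+2^{i+1}W)^{-r}$, Taylor-expands the second factor mod $2^{i+3}$, and then exploits the elementary facts that squares are $\equiv 0,1\pmod 4$, sums of two positive squares are never $\equiv 3\pmod 4$, sums of four squares equal to $4n+3$ force exactly one even entry (hence a factor of $4$ by position-symmetry), and $8n+5$ is not a square and its two-square representations pair off. This buys you a single unified computation for \eqref{conjp-2} and \eqref{conjp-3} in place of the paper's two separate, heavier arguments. Two small sanity checks worth recording in a write-up: the valuation identity $\nu_2\bigl(\binom{2^i}{j}2^j\bigr)=i+(j-\nu_2(j))$ shows $j=4$ is the unique index $\ge 3$ with valuation only $i+2$, justifying the isolated treatment; and the step $(1+2^{i+1}W)^{-r}\equiv 1-r\cdot 2^{i+1}W\pmod{2^{i+3}}$ really does require $i\ge 1$ (so that $2(i+1)\ge i+3$), which the hypothesis provides.
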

\noindent The proof of Theorem \ref{thm:conj} is via elementary means and is given in Section \ref{sec:conj}.

\begin{remark}
    It was communicated to us that the last congruence in Conjecture \ref{conj2i} has been recently proven by Qi, Sang, Yao, and Zhou \cite{QiSangYaoZhou}.
\end{remark}

We now move towards proving arithmetic properties of $\opt_k(n)$ modulo powers of $3$. So far, no congruence for modulo arbitrary powers of $3$ are known for general values of $k$. We fill in this gap via the next theorem.
\begin{theorem}\label{thm:KSV:mod3}
For all $i\geq1$ and $n\geq 0$, we have
    \begin{align}
    \opt_{3^i}(3n+2)&\equiv0\pmod{3^{i+1}}.\label{KSV:mod3:Cong1}
    \end{align}    
\end{theorem}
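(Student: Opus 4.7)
The plan is to exploit the multiplicativity encoded in \eqref{opt}, namely
\[
\sum_{n\ge 0}\opt_{3^i}(n)q^n \;=\; \biggl(\sum_{n\ge 0}\opt_3(n)q^n\biggr)^{3^{i-1}}\!,
\]
so that controlling $\opt_{3^i}(3n+2)$ reduces to controlling the coefficient of $q^{3n+2}$ in the $3^{i-1}$-th power of $A(q):=\sum_{n\ge 0}\opt_3(n)q^n$. This reduction is attractive because Theorem \ref{thm:3} already supplies strong divisibility at the base level $i=1$.

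Using \eqref{eq9} (which gives $\opt_3(3n+1)\equiv 0\pmod 6$) and \eqref{eq12} (which gives $\opt_3(3n+2)\equiv 0\pmod{18}$), I would first write the $3$-dissection
\[
A(q) \;=\; \alpha(q^3) \;+\; 6q\,\beta(q^3) \;+\; 18q^2\,\gamma(q^3), \qquad \alpha,\beta,\gamma \in \mathbb{Z}[[q]],
\]
and then expand $A(q)^{3^{i-1}}$ via the multinomial theorem. The coefficient of $q^{3n+2}$ collects precisely those triples $(a,b,c)$ with $a+b+c=3^{i-1}$ and $b+2c\equiv 2\pmod 3$, each contributing the scalar prefactor
\[
\binom{3^{i-1}}{a,b,c}\cdot 6^{b}\cdot 18^{c} \;=\; \binom{3^{i-1}}{a,b,c}\cdot 2^{b+c}\cdot 3^{b+2c}.
\]

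The heart of the proof, and the main obstacle, is then the $3$-adic bound
\[
v_3\!\binom{3^{i-1}}{a,b,c} + (b+2c) \;\geq\; i+1
\]
for every such triple. I would establish it using the classical consequence of Kummer's theorem that $v_3\binom{3^{i-1}}{k} = (i-1)-v_3(k)$ for $1\le k\le 3^{i-1}$, together with the factorisation $\binom{3^{i-1}}{a,b,c}=\binom{3^{i-1}}{b+c}\binom{b+c}{b}$, to reduce the bound to
\[
(b+2c) - v_3(b+c) + v_3\!\binom{b+c}{b} \;\geq\; 2,
\]
which a short case split settles: when $c\ge 1$ one writes $(b+2c)-v_3(b+c) = \bigl((b+c)-v_3(b+c)\bigr)+c \ge 1+1$, using $k\ge v_3(k)+1$ for $k\ge 1$; and when $c=0$ the residue condition $b+2c\equiv 2\pmod 3$ forces $v_3(b)=0$, so the left side is $b\ge 2$. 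Once this inequality is in place, every monomial contributing to $[q^{3n+2}]A(q)^{3^{i-1}}$ is divisible by $3^{i+1}$, yielding \eqref{KSV:mod3:Cong1}.
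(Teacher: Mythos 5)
Your proof is correct and takes a genuinely different, and arguably cleaner, route than the paper's. The paper writes $\sum_n\opt_{3^i}(n)q^n=f_{-1}^{3^i}/f_1^{3^i}$, invokes the explicit $3$-dissections \eqref{3df1^3} and \eqref{3d1/f1^3} involving Borwein's theta function $a(q)$, expands via a four-index multinomial, extracts the $q^{3n+2}$ terms, and then applies Lemma \ref{modified-ksv} to control the $3$-adic valuations of the coefficients. You instead use only the structural identity $\sum_n\opt_{3^i}(n)q^n=\bigl(\sum_n\opt_3(n)q^n\bigr)^{3^{i-1}}$ together with the already-established base congruences \eqref{eq9} and \eqref{eq12}, which let you package the $i=1$ generating function as $\alpha(q^3)+6q\beta(q^3)+18q^2\gamma(q^3)$ with $\alpha,\beta,\gamma\in\mathbb{Z}[[q]]$; a single three-index multinomial plus the exact valuation $v_3\binom{3^{i-1}}{k}=(i-1)-v_3(k)$ then finishes the job. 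Your case split is sound: the residue condition $b+2c\equiv2\pmod3$ rules out $(b,c)=(0,0)$, so $b+c\geq1$ and the factorisation $\binom{3^{i-1}}{a,b,c}=\binom{3^{i-1}}{b+c}\binom{b+c}{b}$ is legitimate; when $c\geq1$ the elementary bound $(b+c)-v_3(b+c)\geq1$ together with $c\geq1$ gives the required $\geq 2$; and when $c=0$ the condition forces $b\equiv2\pmod3$, hence $b\geq2$ and $v_3(b)=0$. What your approach buys is that it builds on Theorem~\ref{thm:3} (proved earlier and independently, so there is no circularity) instead of re-deriving everything from $\eta$-quotient dissections; what the paper's approach buys is self-containment at the level of $q$-series identities and a direct appeal to its Lemma~\ref{modified-ksv}, which is the tool the authors reuse elsewhere.
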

\noindent The proof of Theorem \ref{thm:KSV:mod3} is via elementary means and is given in Section \ref{sec:KSV:mod3}.

Several authors (see \cite{GoswamiJha} and \cite{Singh} for two recent examples which have motivated our results below) have over the years also studied divisibility and density properties of various types of partition functions using the theory of modular forms. We close our results by giving some representative examples of such results in Theorems \ref{thm:mf-1} and \ref{thm:m2} below.
\begin{theorem}\label{thm:mf-1}
    Let $k,n\geq 0$ be integers, for each $i$ with $1\leq i\leq k+1$, if $p_i\geq 3$ is a prime such that $p_i\not\equiv 1\pmod 8$, then for any integer $j\not\equiv 0\pmod{p_{k+1}}$ we have
    \[
    \opt_3(8p_1^2p_2^2\ldots p_{k+1}^2n+p_1^2p_2^2\ldots p_k^2p_{k+1}(8j+p_{k+1})+1)\equiv 0\pmod{8}.
    \]
\end{theorem}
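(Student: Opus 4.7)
The strategy is to realise (a variant of) the generating function for $\opt_3(8n+2)$ as a half-integer-weight cusp form modulo $8$ on a congruence subgroup, and then apply Shimura-type Hecke operators $T_{p^2}$ for primes $p\not\equiv 1\pmod 8$, inducting on $k$. One first checks that every index appearing in the statement is $\equiv 2\pmod 8$, because each $p_i$ is odd and hence $p_i^2\equiv 1\pmod 8$: this pinpoints the progression $8n+2$ as the relevant one.

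Starting from \eqref{opt} with $k=3$, namely $\sum_n\opt_3(n)q^n=f_2^9/(f_1^6 f_4^3)$, I would use the standard congruences $f_1^2\equiv f_2\pmod 2$ and $f_1^4\equiv f_2^2\pmod 4$, together with the 2- and 4-dissections of $1/f_1^6$, to extract an eta-quotient expression for
\[F(q)\;:=\;\sum_{n\geq 0}\opt_3(8n+2)\,q^{n}\pmod{8}.\]
Multiplying $F$ by an appropriate power of the Dedekind $\eta$-function produces a holomorphic cusp form $\widetilde F(z)$ of some half-integer weight $\lambda+\tfrac12$ on a $\Gamma_0(N)$ with a Kronecker character $\chi$. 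This modular realisation is the backbone of the argument.

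For any prime $p\nmid N$ the Shimura Hecke operator $T_{p^2}$ acts on $\widetilde F=\sum a(n)q^n$ with Fourier coefficients
\[a(p^2n)+\chi^{*}(p)\Bigl(\tfrac{(-1)^\lambda n}{p}\Bigr)p^{\lambda-1}a(n)+\chi^{*}(p)^{2}p^{2\lambda-1}a(n/p^2).\]
Restricting to $n$ with $\gcd(n,p)=1$ annihilates the $a(n/p^2)$ term, and the hypothesis $p\not\equiv 1\pmod 8$ determines the values of $\left(\tfrac{-1}{p}\right)$ and $\left(\tfrac{2}{p}\right)$ precisely enough that the remaining two terms combine to $\equiv 0\pmod 8$ on the prescribed arithmetic progression. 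This yields the base case $k=0$, i.e.\ $\opt_3\bigl(8p_1^2 n+p_1(8j+p_1)+1\bigr)\equiv 0\pmod 8$ whenever $p_1\nmid j$. Induction on $k$ then proceeds by applying the same Hecke argument at each successive prime $p_{k+1}\not\equiv 1\pmod 8$ to the already-restricted coefficient sequence, peeling off the primes one at a time until the fully nested progression in the statement is obtained.

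\textbf{Main obstacle.} The bulk of the work is in the modular-form realisation step: explicitly performing the 8-dissection of $f_2^9/(f_1^6 f_4^3)$ modulo $8$, and then choosing the auxiliary $\eta$-multiplier so that $\widetilde F$ is cuspidal at every cusp of $\Gamma_0(N)$ with Sturm bound small enough that the needed mod-$8$ congruence can be verified by a finite check. A secondary care-point is the bookkeeping of the character $\chi^{*}$ obtained after the dissection, so as to cash in on the hypothesis $p\not\equiv 1\pmod 8$ in the Hecke step. Once the cusp-form incarnation is in place, the Hecke-operator computation and the induction are essentially routine and closely follow the pattern of the modular-form arguments of \cite{GoswamiJha,Singh} cited in the introduction.
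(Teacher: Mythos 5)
Your plan diverges from the paper's in ways that leave a genuine gap. The paper does not set up a half-integer-weight cusp form at all: it quotes the congruence $\sum_{n\geq0}\opt_3(8n+1)q^n\equiv 6f_1f_2\pmod 8$ from \cite{SSS}, recognises $\eta(8z)\eta(16z)$ as an \emph{integer}-weight-$1$ newform on $\Gamma_0(128)$ from Martin's list \cite{Martin}, and exploits the structural fact that this eta-quotient is lacunary: every exponent in its $q$-expansion is $\equiv 1\pmod 8$, so $a(n)=0$ whenever $n\not\equiv 1\pmod 8$, and in particular the Hecke eigenvalue $\lambda(p)=a(p)=0$ for every prime $p\not\equiv 1\pmod 8$. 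That vanishing is what collapses the $T_p$ recursion to $a(pn)+\chi_1(p)a(n/p)=0$ and powers the inductive peeling of primes. Your sketch never identifies an eigenform or explains why the Hecke-transformed object vanishes modulo $8$; the claim that the hypothesis $p\not\equiv 1\pmod 8$ "determines $\left(\tfrac{-1}{p}\right)$ and $\left(\tfrac{2}{p}\right)$ precisely enough that the remaining two terms combine to $\equiv 0\pmod 8$" is unsupported --- those symbols take all three sign-combinations as $p$ ranges over $3,5,7\pmod 8$, so there is no uniform cancellation without the eigenvalue-vanishing input. Without an analogue of that structural fact, the induction step has no engine.

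Separately, you locate the wrong arithmetic progression. You observe (correctly, as the statement is printed) that the index is $\equiv 2\pmod 8$ and therefore aim at a dissection of $\opt_3(8n+2)$. In fact the paper's argument produces $\opt_3\bigl(8p_1^2\cdots p_{k+1}^2 n+p_1^2\cdots p_k^2 p_{k+1}(8j+p_{k+1})\bigr)\equiv 0\pmod 8$, an index $\equiv 1\pmod 8$; the trailing "$+1$" in the theorem statement is a typographical slip. Building on $\opt_3(8n+2)$ would point you at a different eta-quotient whose eigenform structure you would still have to establish, and it would not match the known dissection that the paper leans on.
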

\noindent There are other results of the same type as Theorem \ref{thm:mf-1}. Since the proof techniques are similar, we do not state them here. The proof of Theorem \ref{thm:mf-1} is given in Sections \ref{sec:mf}.

For a fixed positive integer $k$, Gordon and Ono \cite{GordonOno} proved that $p(n)$ is divisible by $2^k$ for almost all $n$. Several mathematicians have done similar work related to other classes of partitions, for instance by Ray and Barman \cite{RayBarman}, Veena and Fathima \cite{VeenaFathima}, etc. It can be shown easily \cite[Eq. (43)]{DremaSaikia} that
\[
\opt_3(2n+1)\equiv 0 \pmod 2,
\]
for all $n\geq 0$. The authors in \cite{SSS} have found several new congruences satisfied by the $\opt_{2k+1}(n)$ function modulo powers of $2$. Motivated by this, we look at the case for $\opt_3(n)$ and study its properties.


\begin{theorem}\label{thm:m2}
    Let $k$ be a positive integer and $p$ be a prime with $p\neq 3$, then $\opt_3(n)$ is almost always divisible by $p^{k}$, that is
    \[
    \lim\limits_{X\rightarrow \infty}\frac{|\{n\leq X~:~\opt_3(n)\equiv 0 \pmod{p^{k}}\}|}{X}=1.
    \]
\end{theorem}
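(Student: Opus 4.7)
The plan is to apply a classical theorem of Serre: if $f(z)=\sum_{n\geq 0}a(n)q^n$ is a holomorphic modular form of positive integral weight on $\Gamma_0(N)$ (with some Nebentypus) having integer Fourier coefficients, then for every positive integer $M$ the set $\{n:a(n)\equiv 0\pmod M\}$ has arithmetic density $1$. The task is therefore to construct, for each prime $p\ne 3$ and integer $k\geq 4$, a holomorphic modular form $G_k(z)$ whose Fourier expansion satisfies $G_k(z)\equiv \sum_{n\geq 0}\opt_3(n)q^n\pmod{p^k}$; Serre's theorem applied to $G_k$ then transfers the density-one conclusion to $\opt_3(n)$.

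To build $G_k$, I would use the elementary congruence $f_m^{p^k}\equiv f_{mp}^{p^{k-1}}\pmod{p^k}$, which follows by induction from $(1-q^n)^p\equiv 1-q^{np}\pmod p$. Starting from
\[
\sum_{n\geq 0}\opt_3(n)q^n=\frac{f_2^9}{f_1^6 f_4^3},
\]
a natural candidate (for odd $p$) is the eta-quotient
\[
G_k(z):=\frac{f_1^{6(p^k-1)}\,f_2^{9}\,f_4^{3(p^k-1)}}{f_p^{6p^{k-1}}\,f_{4p}^{3p^{k-1}}},
\]
possibly multiplied by additional $\eta$-factors (for example powers of $\eta(z)$ or $\eta(pz)$) inserted to enforce the two global exponent congruences modulo $24$ required by Ligozat's criterion. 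The congruence $f_m^{p^k}\equiv f_{mp}^{p^{k-1}}\pmod{p^k}$ guarantees by construction that $G_k\equiv \sum_{n\geq 0}\opt_3(n)q^n\pmod{p^k}$. The prime $p=2$ runs in parallel with $f_1^{2^k}\equiv f_2^{2^{k-1}}$ and $f_4^{2^k}\equiv f_8^{2^{k-1}}\pmod{2^k}$, producing an analogous eta-quotient on $\Gamma_0(8)$.

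The heart of the argument is to verify, via the standard criteria of Ligozat (Theorems~1.64 and 1.65 of Ono's \emph{The Web of Modularity}, for instance), that $G_k$ is a holomorphic modular form of weight $\tfrac{9p^{k-1}(p-1)}{2}$ on $\Gamma_0(4p)$. The hypothesis $p\neq 3$ enters naturally here: for primes $p\geq 5$ one has $p^2-1\equiv 0\pmod{24}$, which is precisely what makes the two global exponent congruences modulo $24$ tractable. The hard part will be the cusp analysis, namely checking that $\sum_{\delta\mid 4p}\frac{\gcd(d,\delta)^2}{\delta}r_\delta\geq 0$ at every cusp $c/d$ of $\Gamma_0(4p)$. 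The denominator factors $f_p$ and $f_{4p}$ produce negative contributions at cusps whose denominator shares a factor with $p$, and it is exactly to ensure that the numerator contributions $6(p^k-1)$ and $3(p^k-1)$ dominate uniformly at every cusp that one needs the lower bound $k\geq 4$. Once this cusp analysis is carried out, Serre's theorem applied to $G_k$ yields a density-one set of $n$ for which the $n$-th Fourier coefficient of $G_k$ is divisible by $p^k$, and passing through $G_k\equiv\sum\opt_3(n)q^n\pmod{p^k}$ gives the same density statement for $\opt_3(n)$.
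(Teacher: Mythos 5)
Your proposal follows essentially the same strategy as the paper: rewrite the generating function $\frac{f_2^9}{f_1^6f_4^3}$ as an eta-quotient, multiply by an auxiliary eta-quotient congruent to $1\pmod{p^k}$ (the paper uses $\eta^{p^{a+k}}(24z)/\eta^{p^k}(24p^az)$ for odd $p$ and $\bigl(\eta^2(24z)/\eta(48z)\bigr)^{2^k}$ for $p=2$, you use $f_1^{6p^k}f_4^{3p^k}/(f_p^{6p^{k-1}}f_{4p}^{3p^{k-1}})$), verify holomorphy at all cusps via the Ligozat--Ono criteria, and then invoke Serre's density theorem. The paper normalizes by replacing $q$ with $q^{24}$ and lands on level $768$, while you work with $f_m$ directly and anticipate needing correction factors to meet the mod-$24$ exponent conditions, but the underlying mechanism is identical.
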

\noindent The proof of Theorem \ref{thm:m2} is given in Section \ref{proof:density}.

The paper is structured as follows. In Section \ref{sec:prelim} we state some preliminary results which are using for our proofs, Sections \ref{Proof-thm:equality} -- \ref{proof:density} contains the proofs of our results, we end our paper with some concluding remarks (and conjectures) in Section \ref{sec:conc}.

\section{Preliminaries}\label{sec:prelim}

\subsection{Elementary Results}\label{sec: elementary}

It can be shown easily \cite[Eq. (43)]{DremaSaikia} that
\[
\opt_3(2n+1)\equiv 0 \pmod 2,
\]
for all $n\geq 0$. Drema and Saikia \cite[Eq. (88)]{DremaSaikia} have also shown that
\begin{align}\label{ds-1}
    \opt_3(3n+1) &\equiv \opt_3(3n+2) \equiv 0 \pmod{3}.
\end{align}
Now, working modulo 2, we have
\begin{align}\label{eq:eq}
     \sum_{n\geq 0}\opt_3(n)q^n \equiv1\pmod{2}.
\end{align}
Hence, for $n\geq1$, we have
\begin{align}
    \opt_3(n) \equiv 0 \pmod{2}.\label{n2}
\end{align}
The last congruence is reminiscent of the following congruence for overpartitions
\[
\op(n)\equiv 0 \pmod 2, \quad \text{for all}~n>0.
\]

Some known 2-, 3-dissections (see for example \cite[Lemmas 2, 3  and 4]{matching}) are stated in the following lemma, which will be used subsequently.
\begin{lemma}\label{lem-diss}
We have
\begin{align}
f_{1}^2 &= \frac{f_{2}f_{8}^5}{f_{4}^2f_{16}^2} - 2 q \frac{f_{2} f_{16}^2}{f_{8}},\label{disf1^2}\\
\frac{1}{f_{1}^2} &= \frac{f_{8}^5}{f_{2}^5 f_{16}^2} + 2 q \frac{f_{4}^2 f_{16}^2 }{f_{2}^5 f_{8}},\label{dis1byf1^2}\\
f_{1}^4 &= \frac{f_{4}^{10}}{f_{2}^2f_{8}^4} - 4 q \frac{f_{2}^2 f_{8}^4}{f_{4}^2},\label{disf1^4}\\
\frac{1}{f_{1}^4} &= \frac{f_{4}^{14}}{f_{2}^{14} f_{8}^4} + 4 q \frac{f_{4}^2 f_{8}^4}{f_{2}^{10}},\label{dis1byf1^4}\\
f_{1}f_{2} &= \frac{f_{6}f_{9}^4}{f_{3}f_{18}^2} - qf_{9}f_{18} - 2q^2 \frac{f_{3}f_{18}^4}{f_{6}f_{9}^2},\label{disf1f2}\\
f_{1}^3 &= \frac{f_{6}f_{9}^6}{f_{3}f_{18}^3}-3qf_{9}^3+4q^3 \frac{f_{3}^2f_{18}^6}{f_{6}^2f_{9}^3},\label{disf1^3}\\
&=a_3 f_3-3 qf_9^3,\label{3df1^3}\\
\frac{1}{f_1^3}&=a_3^2\frac{f_9^3}{f_3^{10}}+3 a_3 q \frac{f_9^6}{f_3^{11}}+9 q^2\frac{f_9^9}{f_3^{12}},\label{3d1/f1^3}
\end{align} where $a_n=a\left(q^n\right):=\displaystyle{\sum_{j,k=-\infty}^\infty q^{n\cdot (j^2+jk+k^2)}}$ is one of Borweins' cubic theta functions.
\end{lemma}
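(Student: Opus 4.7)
The identities in Lemma \ref{lem-diss} are classical $q$-series dissections catalogued in the cited reference \cite{matching}; my plan is to sketch a unified derivation framework grounded in Jacobi's triple product, Ramanujan's theta functions, and Borwein's cubic theta function, and then acknowledge that the concise exposition in the paper is simply to cite \cite{matching}.

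For the $2$-dissections \eqref{disf1^2}--\eqref{dis1byf1^4}, the unifying tool is Euler's product form $\varphi(-q)=f_1^2/f_2$ for Ramanujan's theta function $\varphi(q)=\sum_{n\in\mathbb{Z}}q^{n^2}$. I would use the classical $2$-dissection of $\varphi(-q)$, which decomposes it as an even-in-$q^2$ piece minus $2q$ times an odd-in-$q^2$ piece, each piece being a ratio of $\psi$ and $\varphi$ at even arguments; rewriting these via Jacobi's triple product in the $f_j$ notation and multiplying through by $f_2$ yields \eqref{disf1^2}. Identity \eqref{dis1byf1^2} is then obtained from the ansatz $1/f_1^2=A(q^2)+2q\,B(q^2)$: substituting this into $f_1^2\cdot(1/f_1^2)=1$ and matching even and odd $q$-powers recovers $A$ and $B$. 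Squaring \eqref{disf1^2} and \eqref{dis1byf1^2} and re-dissecting the resulting mixed terms produces \eqref{disf1^4} and \eqref{dis1byf1^4} respectively.

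For the $3$-dissections \eqref{disf1f2}--\eqref{3d1/f1^3}, the engine is Jacobi's identity $f_1^3=\sum_{n\ge 0}(-1)^n(2n+1)q^{n(n+1)/2}$. Splitting this sum by the residue of $n\bmod 3$: the $n\equiv 1\pmod 3$ contribution simplifies via the substitution $n=3m+1$ to $-3qf_9^3$ (using Jacobi's identity itself at argument $q^9$), while the two remaining subseries indexed by $n\equiv 0,2\pmod 3$ can each be identified via Jacobi's triple product as the first and third terms of \eqref{disf1^3}. The repackaging $a_3 f_3 = (\text{first term of \eqref{disf1^3}})+(\text{third term of \eqref{disf1^3}})$ follows from the theta-series definition of $a(q)=\sum_{j,k\in\mathbb{Z}}q^{j^2+jk+k^2}$ together with the known $2$-dissection $a(q)=\frac{f_2 f_3^6}{f_1^2 f_6^3}+4q\frac{f_1 f_6^6}{f_2^2 f_3^3}$, and yields \eqref{3df1^3}. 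For \eqref{3d1/f1^3}, I would cube the reciprocal of \eqref{3df1^3}, writing $1/f_1^3=(a_3 f_3)^{-3}(1-3qf_9^3/(a_3 f_3))^{-3}$, and truncate the resulting expansion after the quadratic term using the cubic relation $a(q^3)^3 - 27q^3 f_9^9/f_3^3 \equiv (\text{simplification})$ arising from Borweins' cubic identity. Finally, \eqref{disf1f2} follows by multiplying the $3$-dissections of $f_1$ and $f_2$ obtained from Euler's pentagonal number theorem applied modulo $3$ and collecting terms by $q$-residue.

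The main obstacle is purely notational rather than conceptual: each right-hand side must be rearranged into precisely the stated form in terms of the $f_j$'s, which is voluminous but routine, and the inversion step producing \eqref{3d1/f1^3} requires the mildly delicate Borwein-type truncation. Since all eight identities are tabulated verbatim in \cite{matching} in the form stated, the proof offered by the authors is simply a reference to that source, and any self-contained derivation would follow the skeleton above and then cross-check against the statement.
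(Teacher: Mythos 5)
Your reading of the paper is correct: these dissections are stated as known facts with a reference to \cite{matching}, and no proof is given, so the ``proof'' is simply that citation. Your derivation sketch identifies the right toolkit ($\varphi(-q)=f_1^2/f_2$ and its $2$-dissection $\varphi(-q)=\varphi(q^4)-2q\psi(q^8)$ for \eqref{disf1^2}, the ansatz-and-multiply trick for \eqref{dis1byf1^2}, Jacobi's identity for \eqref{disf1^3} and \eqref{3df1^3}, and Borwein's $a(q)$ for the cubic pieces). One small imprecision: in the split of $\sum_{n\ge 0}(-1)^n(2n+1)q^{n(n+1)/2}$ by $n\bmod 3$, the exponent $n(n+1)/2$ is never $\equiv 2\pmod 3$, so both the $n\equiv 0$ and $n\equiv 2$ subseries land in the $q^{3k}$ part; it is their \emph{sum} that equals $a_3 f_3=\frac{f_6f_9^6}{f_3f_{18}^3}+4q^3\frac{f_3^2f_{18}^6}{f_6^2f_9^3}$, not a term-by-term identification with the first and third summands of \eqref{disf1^3}. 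This does not affect your conclusion, which matches the paper's.
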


We need the following lemma, which is a refined form of a result of Keister, Sellers and Vary \cite[Lemma 7]{KeisterSellersVary}.
\begin{lemma}\label{lem1}
Let $m$ be a non-negative integer, then for all $1\leq n\leq 2^m$, we have
\[
\binom{2^m}{n}2^n\equiv 0 \pmod{2^{m+\left\lfloor\frac{n-1}{2}\right\rfloor+1}}.
\]
\end{lemma}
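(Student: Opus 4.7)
The plan is to extract the exact $2$-adic content of $\binom{2^m}{n}$ and then reduce the claim to a one-variable inequality. Writing $\nu_2$ for the $2$-adic valuation, the congruence to be proved is equivalent to
\[
\nu_2\!\left(\binom{2^m}{n}\right) + n \;\geq\; m + \left\lfloor\tfrac{n-1}{2}\right\rfloor + 1,\qquad 1\leq n\leq 2^m.
\]

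For the first step I would use the identity $n\binom{2^m}{n}=2^m\binom{2^m-1}{n-1}$ together with the classical consequence of Lucas' theorem that every $\binom{2^m-1}{j}$, $0\leq j\leq 2^m-1$, is odd (the base-$2$ expansion of $2^m-1$ is the all-ones string of length $m$, so each factor in Lucas' product is $\binom{1}{j_i}\in\{1\}$). This gives the well-known evaluation
\[
\nu_2\!\left(\binom{2^m}{n}\right) \;=\; m - \nu_2(n),\qquad 1\leq n\leq 2^m.
\]

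Substituting this into the displayed inequality cancels the $m$'s, so the lemma reduces to the purely numerical statement
\[
n - \nu_2(n) \;\geq\; \left\lfloor\tfrac{n-1}{2}\right\rfloor + 1,\qquad n\geq 1,
\]
which I would dispatch by a short parity case split. If $n$ is odd then $\nu_2(n)=0$ and the claim reads $n\geq (n+1)/2$, immediate for $n\geq 1$. If $n$ is even, write $n=2^{v}m'$ with $v\geq 1$ and $m'$ odd; then $\lfloor(n-1)/2\rfloor+1=n/2$, and the inequality becomes $n/2\geq v$. Since $n\geq 2^{v}$, this is implied by $2^{v-1}\geq v$, an elementary fact provable by a one-line induction on $v$.

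No step in the above is really an obstacle; the only conceptual point worth flagging is that the initial reformulation in terms of $\nu_2$ collapses what looks like a two-parameter divisibility statement into an inequality in the single variable $n$, after which the verification is routine. It is also the place where one sees \emph{why} the refined exponent $m+\lfloor(n-1)/2\rfloor+1$ is admissible: all of it, apart from the explicit factor $2^n$, is absorbed by the $m-\nu_2(n)$ contribution coming from $\binom{2^m}{n}$ itself.
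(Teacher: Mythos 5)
Your proof is correct. The paper does not actually spell out a proof of this lemma --- it only states it as a refinement of Keister--Sellers--Vary's Lemma~7 and leaves the verification implicit --- but the route you take is precisely the natural one that the refinement is built on: the identity $n\binom{2^m}{n}=2^m\binom{2^m-1}{n-1}$ together with the oddness of $\binom{2^m-1}{j}$ yields $\nu_2\!\left(\binom{2^m}{n}\right)=m-\nu_2(n)$, after which the claim collapses to the single-variable inequality $n-\nu_2(n)\geq\lfloor(n-1)/2\rfloor+1$, which you dispose of cleanly by a parity split and the bound $2^{v-1}\geq v$. One small remark: your reformulation also makes transparent why the analogous Lemma~\ref{modified-ksv} (the base-$3$ version with floor $\lfloor(n-1)/3\rfloor$) goes through by the same mechanism, since $\nu_3\!\left(\binom{3^m}{n}\right)=m-\nu_3(n)$ and $n-\nu_3(n)\geq\lfloor(n-1)/3\rfloor+1$ follows from $3^{v-1}\geq v$ --- exactly the ``same line of proof'' the paper alludes to when it omits that argument.
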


\noindent In fact, we also need the following lemma, whose proof follows exactly the line of proof as Lemma \ref{lem1}, so we omit the proof here.
\begin{lemma}\label{modified-ksv}
Let $m$ be a nonnegative integer, then for all $1\leq n\leq 3^m$, we have
\[
\binom{3^m}{n}3^n\equiv 0 \pmod{3^{m+\left\lfloor\frac{n-1}{3}\right\rfloor+1}}.
\]
\end{lemma}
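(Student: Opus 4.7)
The plan is to follow the same strategy as in the proof of Lemma \ref{lem1}, with the prime $2$ replaced by $3$ throughout. The key first step is to compute the exact $3$-adic valuation of $\binom{3^m}{n}$. Starting from the identity
\[
n\binom{3^m}{n} \;=\; 3^m\binom{3^m-1}{n-1},
\]
and noting via Lucas' theorem that $\binom{3^m-1}{n-1}\not\equiv 0\pmod 3$ (since every base-$3$ digit of $3^m-1=(22\cdots 2)_3$ equals $2$ and hence dominates the corresponding digit of $n-1$), one obtains the formula
\[
\nu_3\!\left(\binom{3^m}{n}\right) \;=\; m-\nu_3(n), \qquad 1\le n\le 3^m.
\]

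With this valuation in hand, the claim $\binom{3^m}{n}3^n\equiv 0\pmod{3^{m+\lfloor(n-1)/3\rfloor+1}}$ reduces to the elementary inequality
\[
n-\nu_3(n) \;\ge\; \left\lfloor\frac{n-1}{3}\right\rfloor + 1.
\]
I would verify this by splitting on $k:=\nu_3(n)$. In the case $k=0$, the bound $\lfloor(n-1)/3\rfloor\le(n-1)/3$ reduces it to $2n\ge 2$, immediate for $n\ge 1$. In the case $k\ge 1$, writing $n=3^k s$ with $\gcd(s,3)=1$ gives $\lfloor(n-1)/3\rfloor+1=3^{k-1}s$, so the inequality becomes $2\cdot 3^{k-1}s\ge k$, which follows from $s\ge 1$ together with the elementary bound $2\cdot 3^{k-1}\ge k$ for $k\ge 1$. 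The endpoint $n=3^m$ is already subsumed (taking $k=m$), since $\binom{3^m}{3^m}=1$ yields $3^m-m\ge 3^{m-1}$, i.e., $2\cdot 3^{m-1}\ge m$.

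No substantial obstacle is expected, as the proof is structurally identical to that of Lemma \ref{lem1}. The one step requiring attention is the correct derivation of the $3$-adic valuation formula via Lucas' theorem; the remainder is a routine case-check on $\nu_3(n)$, with only the tracking of the floor function and the denominator $3$ (rather than $2$) requiring care during the transcription from the proof of Lemma \ref{lem1}.
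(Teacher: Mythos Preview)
Your proposal is correct and follows precisely the strategy the paper indicates: the authors omit the proof entirely, saying only that it ``follows exactly the line of proof as Lemma~\ref{lem1}'', and your argument does exactly this, replacing the prime $2$ by $3$ throughout. The computation of $\nu_3\!\binom{3^m}{n}=m-\nu_3(n)$ via the identity $n\binom{3^m}{n}=3^m\binom{3^m-1}{n-1}$ and Lucas' theorem, followed by the reduction to the elementary inequality $n-\nu_3(n)\ge\lfloor(n-1)/3\rfloor+1$, is the standard route for such results (and matches the Keister--Sellers--Vary argument underlying Lemma~\ref{lem1}); the case split on $\nu_3(n)$ is handled cleanly and all edge cases are covered.
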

\noindent We also need the following results from Hirschhorn and Sellers \cite[Theorems 1.1 and 1.2]{HSellers}.
\begin{lemma}\label{lem2}
    For all $n\geq 1$, we have
    \[
    \opt_1(n)\equiv \begin{cases}
        2\pmod 4 \quad \text{if $n$ is a square or twice a square},\\0\pmod 4\quad \text{otherwise}.
    \end{cases}
    \]
\end{lemma}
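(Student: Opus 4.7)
The plan is to first reduce the generating function of $\opt_1(n)$ modulo $4$, identify the reduction as a classical theta product, and then extract the coefficient by a symmetry argument on representations of $n$ by the quadratic form $a^2+2b^2$.

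First, starting from $\sum_{n\geq 0}\opt_1(n)q^n = f_2^3/(f_1^2 f_4)$, I would exploit the elementary congruence $(1-q^n)^2\equiv(1+q^n)^2\pmod{4}$ (which follows from $-2\equiv 2\pmod 4$) to derive $f_1^4\equiv f_2^2\pmod{4}$. Multiplying numerator and denominator by $f_1^2$ and substituting this gives
\[
\sum_{n\geq 0}\opt_1(n)q^n \;\equiv\; \frac{f_1^2 f_2}{f_4} \;=\; \frac{f_1^2}{f_2}\cdot\frac{f_2^2}{f_4} \;=\; \varphi(-q)\,\varphi(-q^2) \pmod{4},
\]
where $\varphi(-q)=f_1^2/f_2=\sum_{a\in\mathbb{Z}}(-1)^a q^{a^2}$ is Ramanujan's theta function and $\varphi(-q^2)=f_2^2/f_4$ arises by the substitution $q\mapsto q^2$.

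Second, expanding the product I would write
\[
\varphi(-q)\,\varphi(-q^2) \;=\; \sum_{(a,b)\in\mathbb{Z}^2}(-1)^{a+b}\,q^{a^2+2b^2},
\]
so that the coefficient of $q^n$ equals the signed count of representations $n=a^2+2b^2$. Both the sign $(-1)^{a+b}$ and the form $a^2+2b^2$ are invariant under the Klein four-group $\{\pm 1\}^2$ acting by $(a,b)\mapsto(\pm a,\pm b)$. Representations with $a\neq 0$ and $b\neq 0$ fall into orbits of size $4$, each contributing $4(-1)^{a+b}\equiv 0\pmod 4$. Orbits in which exactly one of $a,b$ vanishes have size $2$ and contribute $\pm 2\equiv 2\pmod 4$; the case $b=0$ occurs iff $n$ is a nonzero square, and the case $a=0$ occurs iff $n$ is twice a nonzero square. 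For $n\geq 1$ these two cases are mutually exclusive, since $a^2=2b^2$ with $a,b\in\mathbb{Z}$ forces $a=b=0$ by the irrationality of $\sqrt 2$.

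Combining these observations yields exactly the dichotomy asserted in the lemma: the coefficient of $q^n$ is $\equiv 2\pmod 4$ precisely when $n$ is a square or twice a square, and $\equiv 0\pmod 4$ otherwise. The only nontrivial input is the congruence $f_1^4\equiv f_2^2\pmod 4$, which I expect to be the main (if mild) obstacle to get right; beyond that, the argument is a purely elementary orbit count on lattice points and sidesteps any dissection identities or modular-forms machinery.
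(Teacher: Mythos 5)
Your argument is correct. Note, however, that the paper does not actually prove this lemma: it is cited directly from Hirschhorn and Sellers (their Theorem~1.1), so there is no in-paper proof to compare against line by line. Your route is a clean, self-contained derivation: reducing $f_2^3/(f_1^2 f_4)$ to $\varphi(-q)\,\varphi(-q^2)$ modulo $4$ via the unit congruence $f_1^4 \equiv f_2^2 \pmod 4$, then reading off coefficients as signed counts of representations $n = a^2 + 2b^2$ grouped into $\{\pm 1\}^2$-orbits. A slightly more symmetric way to see the first step is to observe that $f_2^3/(f_1^2 f_4) = \varphi(-q^2)/\varphi(-q)$ exactly, and that $\varphi(-q)^2 = f_1^4/f_2^2 \equiv 1 \pmod 4$, so multiplying and dividing by $\varphi(-q)$ gives the theta product directly; this is the same reduction you perform. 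Your handling of the boundary cases is right and worth keeping explicit: the unique fixed point $(0,0)$ corresponds only to $n=0$, which is excluded by the hypothesis $n \ge 1$, and for $n\ge 1$ the two size-two orbit types $(\pm a, 0)$ and $(0, \pm b)$ cannot both occur because $a^2 = 2b^2$ has no nonzero integer solutions. In short, this is the expected theta-function/lattice-orbit argument, it is complete, and it makes the lemma self-contained where the paper merely cites it.
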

\begin{lemma}\label{lem3}
    For all $n\geq 1$, we have
    \[
    \opt_1(n)\equiv \begin{cases}
        2\quad \text{or}\quad 6\pmod 8 \quad \text{if $n$ is a square or twice a square},\\0\quad \text{or}\quad 4\pmod 8\quad \text{otherwise}.
    \end{cases}
    \]
\end{lemma}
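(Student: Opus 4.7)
The plan is to derive Lemma \ref{lem3} as an immediate consequence of Lemma \ref{lem2}. The four residue classes modulo $8$ split naturally according to their reduction modulo $4$: the classes $\{0,4\}$ are precisely those congruent to $0$ modulo $4$, while $\{2,6\}$ are precisely those congruent to $2$ modulo $4$. Thus the conclusion of Lemma \ref{lem3}, namely that the residue $\opt_1(n)\bmod 8$ lies in $\{2,6\}$ exactly when $n$ is a square or twice a square and lies in $\{0,4\}$ otherwise, is logically equivalent to the conclusion of Lemma \ref{lem2}. Since Lemma \ref{lem2} is already stated (following Hirschhorn and Sellers), the deduction is just an unpacking of the definition of ``$\equiv a\ \text{or}\ b\pmod 8$'' and requires no additional computation.

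For completeness, a direct proof along the lines of what Hirschhorn and Sellers carry out in their original treatment would proceed from the generating function
$$\sum_{n\geq 0}\opt_1(n)q^n=\frac{f_2^3}{f_1^2 f_4}.$$
One applies the 2-dissection identity \eqref{dis1byf1^2} for $1/f_1^2$ so that the series splits as $A(q^2)+qB(q^2)$ for explicit $A$ and $B$. Matching $A$ against Jacobi's identity $\varphi(q)=f_2^5/(f_1^2 f_4^2)=\sum_{n\in\mathbb{Z}}q^{n^2}$ picks out the contribution of squares, while a parallel manipulation of $B$, using $\psi(q)=f_2^2/f_1=\sum_{n\geq 0}q^{n(n+1)/2}$ together with a further 2-dissection, isolates the contribution of twice-squares. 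Reducing the resulting expressions modulo $8$ then produces the claimed dichotomy.

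The main obstacle in the direct approach is the careful bookkeeping of $2$-adic valuations through repeated dissections: to sharpen from modulus $4$ to modulus $8$, one needs not only the leading theta contributions but also an accurate accounting of the error terms, which is where identities such as \eqref{disf1^2} would enter. Given that Lemma \ref{lem2} is already on record, however, the reduction sketched in the first paragraph is by far the most efficient route, and it is the one I would adopt.
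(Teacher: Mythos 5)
Your central observation is correct: as this paper has stated it, Lemma~\ref{lem3} is logically equivalent to Lemma~\ref{lem2}. For any integer $x$, the condition $x\equiv 2\pmod 4$ holds if and only if $x\equiv 2$ or $6\pmod 8$, and $x\equiv 0\pmod 4$ holds if and only if $x\equiv 0$ or $4\pmod 8$, so the two dichotomies are the same dichotomy read at different moduli. Your one-paragraph reduction is therefore a complete proof, given Lemma~\ref{lem2}.

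The paper does not prove either lemma: it cites both directly from Hirschhorn and Sellers (their Theorems~1.1 and~1.2). Your route is genuinely different in that it sidesteps any appeal to their Theorem~1.2 entirely and derives Lemma~\ref{lem3} from Lemma~\ref{lem2}. This buys economy, but it also exposes that the statement of Lemma~\ref{lem3} in this paper has been flattened relative to the Hirschhorn--Sellers original: their mod-$8$ theorem almost certainly distinguishes which of $2$ or $6$ (and which of $0$ or $4$) occurs, according to finer arithmetic information about $n$, and that sharper content is what does not follow from Lemma~\ref{lem2} alone. Fortunately, the only place the paper invokes Lemmas~\ref{lem2} and~\ref{lem3} together (the step in the proof of Lemma~\ref{lem:new} where the cross term and the square of the non-square part are discarded modulo~$8$) in fact only needs the mod-$4$ information, so your reduction suffices for the paper's purposes.

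Your second paragraph, sketching a direct generating-function proof via $\frac{f_2^3}{f_1^2 f_4}$ and $2$-dissection, is plausible in outline and is indeed the general flavor of the Hirschhorn--Sellers argument, but it is only a sketch: the bookkeeping you flag as ``the main obstacle'' is the actual content of such a proof, and nothing you have written closes it. Since the trivial reduction to Lemma~\ref{lem2} already settles the matter, I would state only that reduction and drop the sketch, or else carry the dissection computation through to the end.
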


\subsection{A primer on modular forms}\label{sec:modularforms}

We recall some aspects of modular forms that will be used in Sections \ref{sec:mf} and \ref{proof:density}. Let $\mathbb{H}$ be the complex upper half-plane. For a positive integer $N$, we define the following matrix groups:
\begin{align*}
\Gamma & :=\left\{\begin{bmatrix}
a  &  b \\
c  &  d      
\end{bmatrix}: a, b, c, d \in \mathbb{Z}, ad-bc=1
\right\},\\
\Gamma_{\infty} & :=\left\{
\begin{bmatrix}
1  &  n \\
0  &  1      
\end{bmatrix} \in \Gamma : n\in \mathbb{Z}  \right\},
\end{align*}
$$\Gamma_{0}(N) :=\left\{
\begin{bmatrix}
a  &  b \\
c  &  d      
\end{bmatrix} \in \Gamma : c\equiv~0\pmod N \right\},$$
$$\Gamma_{1}(N) :=\left\{
\begin{bmatrix}
a  &  b \\
c  &  d      
\end{bmatrix} \in \Gamma_0(N) : a\equiv~d\equiv~1\pmod N \right\}$$
and
$$
\Gamma(N) :=\left\{
	\begin{bmatrix}
		a  &  b \\
		c  &  d      
	\end{bmatrix} \in \textup{SL}_2(\mathbb{Z}) : a\equiv d\equiv 1\pmod N,~ \textup{and}~b\equiv c\equiv 0\pmod N\right\}.
	$$
        A congruence subgroup $\Gamma(N)$ of $\Gamma$ is a subgroup satisfying $\Gamma(N)\subseteq\Gamma$ for some $N$ and the smallest such $N$ s called the level of $\Gamma$. The group
		$$\textup{GL}^{+}_{2}(\mathbb{R}) :=\left\{
	\begin{bmatrix}
		a  &  b \\
		c  &  d      
	\end{bmatrix} : a, b, c, d \in \mathbb{R}~\textup{and}~ad-bc>0\right\}$$
	acts on $\mathbb{H}$ by $\begin{bmatrix}
		a  &  b \\
		c  &  d      
	\end{bmatrix}z = \dfrac{az+b}{cz+d}$. We identify $\infty$ with $\dfrac{1}{0}$. We also define $\begin{bmatrix}
	a  &  b \\
	c  &  d
	\end{bmatrix}\dfrac{r}{s} = \dfrac{ar+bs}{cr+ds}$, where $\dfrac{r}{s}\in\mathbb{Q}\cup\{\infty\}$. This will give an action of $\textup{GL}^{+}_{2}(\mathbb{R})$ on the extended upper half-plane $\mathbb{H^{*}}=\mathbb{H}\cup\mathbb{Q}\cup\{\infty\}$. Suppose that $\Gamma$ is a congruence subgroup of $\textup{SL}_2(\mathbb{Z})$, then a cusp of $\Gamma$ is an equivalence class in $\mathbb{Q}\cup\{\infty\}$.

The group $\textup{GL}_2^{+}(\mathbb{R})$ also acts on the functions $f : \mathbb{H} \to \mathbb{C}$. In particular, suppose that $\gamma = \begin{bmatrix}
a  &  b \\
c  &  d      
\end{bmatrix} \in \textup{GL}_2^{+}(\mathbb{R}) $. If $f(z)$ is a meromorphic function on $\mathbb{H}$ and $\ell$ is an integer, then define the slash operator $\mid_{\ell}$ by 
$(f\mid_{\ell}\gamma )(z) := (det \gamma)^{\ell/2}(cz+d)^{-\ell}f(\gamma z)$.\\

\begin{definition}
Let $\Gamma$ be a congruence subgroup of level N. A holomorphic function $f : \mathbb{H} \to \mathbb{C}$ is called a modular form with integer weight $\ell$ on $\Gamma$ if the following hold:
\begin{enumerate}
    \item We have
    \begin{align*}
        f\left(\frac{az+b}{cz+d}\right)=(cz+d)^{\ell}f(z)
    \end{align*}
    for all $z\in\mathbb{H}$ and all $\begin{bmatrix}
        a&b\\
        c&d
    \end{bmatrix}\in\Gamma$.
    \item If $\gamma\in\textup{SL}_2(\mathbb{Z})$, then $(f\mid_{\ell}\gamma )(z)$ has a Fourier expansion of the form
    \begin{align*}
        (f\mid_{\ell}\gamma )(z)=\sum_{n=0}^{\infty}a_{\gamma}(n)q^n_{N},
    \end{align*}
    where $q^n_{N}=e^{\frac{2\pi iz}{N}}$.
\end{enumerate}
\end{definition}

We denote by $M_\ell(\Gamma_1(N))$ for a positive integer $\ell$, the complex vector space of modular forms of weight $\ell$ with respect to $\Gamma_1(N)$. Also
\begin{align*}
[\Gamma : \Gamma_0(N)]&:=N\prod_{\ell\mid N} \left( 1+\dfrac{1}{\ell}\right),
\end{align*}
where $\ell$ is a prime number.
\begin{definition}\cite[Definition 1.15]{OnoModularity}
    If $\chi$ is a Dirichlet character modulo $N$, then a modular form $f\in M_\ell(\Gamma_1(N))$ has Nebentypus character $\chi$ if 
    $$f\left( \frac{az+b}{cz+d}\right)=\chi(d)(cz+d)^{\ell}f(z)$$ for all $z\in \mathbb{H}$ and all $\begin{bmatrix}
a  &  b \\
c  &  d      
\end{bmatrix}\in \Gamma_0(N)$. The space of such modular forms is denoted by $M_\ell(\Gamma_0(N),\chi)$.
\end{definition}

Recall that the Dedekind's eta-function $\eta(z)$ is defined by
\begin{align*}
	\eta(z):=q^{1/24}(q;q)_{\infty}=q^{1/24}\prod_{n=1}^{\infty}(1-q^n),
\end{align*}
where $q:=e^{2\pi iz}$ and $z\in \mathbb{H}$. A function $f(z)$ is called an eta-quotient if it is of the form
\begin{align*}
f(z)=\prod_{\delta|N}\eta(\delta z)^{r_\delta},
\end{align*}
where $N$ is a positive integer and $r_{\delta}$ is an integer. We now recall two theorems from \cite[p. 18]{OnoModularity} which will be used to prove our result.

\begin{theorem}\cite[Theorem 1.64 and Theorem 1.65]{OnoModularity}\label{thm:ono1} If $f(z)=\prod_{\delta|N}\eta(\delta z)^{r_\delta}$ 
is an eta-quotient with $\ell=\dfrac{1}{2}\sum_{\delta|N}r_{\delta}\in \mathbb{Z}$, 
with
	$$\sum_{\delta|N} \delta r_{\delta}\equiv 0 \pmod{24}$$ and
	$$\sum_{\delta|N} \frac{N}{\delta}r_{\delta}\equiv 0 \pmod{24},$$
	then $f(z)$ satisfies $$f\left( \frac{az+b}{cz+d}\right)=\chi(d)(cz+d)^{\ell}f(z)$$
	for every  $\begin{bmatrix}
		a  &  b \\
		c  &  d      
	\end{bmatrix} \in \Gamma_0(N)$. Here the character $\chi$ is defined by $\chi(d):=\left(\frac{(-1)^{\ell} \prod_{\delta |N}\delta^{r_{\delta}}}{d}\right)$. 
	 In addition, if $c, d,$ and $N$ are positive integers with $d|N$ and $\gcd(c, d)=1$, then the order of vanishing of $f(z)$ at the cusp $\dfrac{c}{d}$ 
	is $\dfrac{N}{24}\sum_{\delta|N}\dfrac{\gcd(d,\delta)^2r_{\delta}}{\gcd(d,\frac{N}{d})d\delta}$.
\end{theorem}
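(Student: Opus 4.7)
Both parts of the theorem follow from the classical transformation formula of the Dedekind eta function, namely
$$\eta\!\left(\frac{az+b}{cz+d}\right)=\varepsilon(a,b,c,d)(cz+d)^{1/2}\eta(z)\quad\text{for }\begin{bmatrix}a&b\\c&d\end{bmatrix}\in\textup{SL}_2(\mathbb{Z}),$$
where $\varepsilon(a,b,c,d)$ is an explicit $24$-th root of unity expressible via Rademacher's formula in terms of a Dedekind sum and a Jacobi symbol. My plan is to establish the modular transformation law first and then the cusp-order formula, reducing each to this identity.

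For the transformation law, the crucial observation is that if $\gamma=\begin{bmatrix}a&b\\c&d\end{bmatrix}\in\Gamma_0(N)$ and $\delta\mid N$, then $N\mid c$ forces $\delta\mid c$, so the matrix $\gamma_\delta:=\begin{bmatrix}a&b\delta\\c/\delta&d\end{bmatrix}$ lies in $\textup{SL}_2(\mathbb{Z})$; a direct check shows $\gamma_\delta(\delta z)=\delta\gamma z$. Substituting into the eta formula yields $\eta(\delta\gamma z)=\varepsilon(\gamma_\delta)(cz+d)^{1/2}\eta(\delta z)$; raising to the power $r_\delta$ and multiplying over $\delta\mid N$ gives
$$f(\gamma z)=\left(\prod_{\delta\mid N}\varepsilon(\gamma_\delta)^{r_\delta}\right)(cz+d)^{\ell}f(z),$$
so the entire task reduces to identifying the root-of-unity product with $\chi(d)$.

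This identification is the main obstacle. Unpacking $\varepsilon(\gamma_\delta)$ via Rademacher's formula, its exponent is a rational combination of $(a+d)\delta/c$, the Dedekind sum $s(d,c/\delta)$, and a Jacobi-symbol factor depending on the parity of $c/\delta$. When these are weighted by $r_\delta$ and summed, the hypothesis $\sum_\delta\delta r_\delta\equiv 0\pmod{24}$ eliminates the $(a+d)\delta/c$ contribution (together with a parallel $b$-dependent piece), while $\sum_\delta(N/\delta)r_\delta\equiv 0\pmod{24}$, combined with Dedekind-sum reciprocity, cancels the Dedekind-sum contributions modulo $2$. What survives is a Jacobi symbol whose numerator collapses, by multiplicativity, to $(-1)^\ell\prod_\delta\delta^{r_\delta}$, producing exactly $\chi(d)=\left(\frac{(-1)^\ell\prod_\delta\delta^{r_\delta}}{d}\right)$. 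This congruence bookkeeping is essentially Newman's classical computation; intricate but routine once the eta formula and Rademacher's expression are in hand.

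For the cusp-order formula, fix a cusp $c/d$ with $d\mid N$ and $\gcd(c,d)=1$, and a matrix $\sigma\in\textup{SL}_2(\mathbb{Z})$ sending $\infty$ to $c/d$. The cusp width on $\Gamma_0(N)$ is $h=N/(d\gcd(d,N/d))$, and the order of vanishing of $f$ at $c/d$ equals the $q_h$-valuation of $(f\,|_\ell\,\sigma)(z)$ at $i\infty$, where $q_h=e^{2\pi iz/h}$. For each $\delta\mid N$, the eta transformation applied to $\eta(\delta\sigma z)$ rewrites it as a unit times $\eta(Mz)$ for an appropriate $M\in\textup{SL}_2(\mathbb{Z})$ whose lower-left entry involves $\delta/\gcd(d,\delta)$; a careful tracking of the local parameter shows that $\eta(\delta\sigma z)^{r_\delta}$ contributes $h\gcd(d,\delta)^2 r_\delta/(24\delta)$ to the $q_h$-order at $i\infty$. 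Summing over $\delta\mid N$ and substituting $h=N/(d\gcd(d,N/d))$ produces the claimed expression $\tfrac{N}{24}\sum_{\delta\mid N}\tfrac{\gcd(d,\delta)^2 r_\delta}{\gcd(d,N/d)\,d\,\delta}$. The principal subtlety is the correct choice of $M$ and the correct identification of the local parameter $q_h$; beyond that, the computation is an elementary exponent count.
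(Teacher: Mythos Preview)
The paper does not prove this theorem at all: it is quoted verbatim from Ono's monograph \cite{OnoModularity} as a preliminary black-box result (Theorems 1.64 and 1.65 there), so there is no ``paper's own proof'' to compare against. Your sketch is the standard route to these statements---the Newman--Ligozat computation via Rademacher's explicit formula for the eta multiplier system, followed by the Ligozat cusp-order calculation---and as an outline it is accurate; but since the paper simply cites the result, no comparison is possible.
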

Suppose that $\ell$ is a positive integer and that $f(z)$ is an eta-quotient satisfying the conditions of the above theorem. 
If $f(z)$ is holomorphic at all of the cups of $\Gamma_0(N)$, 
then $f(z)\in M_{\ell}(\Gamma_0(N), \chi)$.

\begin{theorem}\cite[due to Serre, p. 43]{OnoModularity}\label{thm:ono2}
    If $f(z)=\sum_{n=0}^\infty a(n)q^n\in M_\ell(\Gamma_0(N), \chi)$ has Fourier expansion
    \[
    f(z)=\sum_{n=0}^\infty c(n)q^n\in \mathbb{Z}[[q]],
    \]
    then for each positive integer $m$ there exists a constant $\alpha>0$ such that
    \[
    |\{n\leq X~:~c(n)\not\equiv0\pmod m\}|=\mathcal{O}\left(\frac{X}{(\log X)^\alpha}\right).
    \]
\end{theorem}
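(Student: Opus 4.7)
The plan is to prove Serre's theorem (Theorem \ref{thm:ono2}) by combining the structure theory of Hecke operators modulo $m$ with the Chebotarev density theorem and a Landau/Selberg--Delange type analytic estimate. First, by the Chinese Remainder Theorem it suffices to treat the case $m=\ell^r$ for a prime $\ell$, so fix such an $m$. The $\mathbb{Z}$-lattice $M_\ell(\Gamma_0(N),\chi)_{\mathbb{Z}}$ of forms with integral Fourier coefficients is finitely generated, so its reduction $M_\ell(\Gamma_0(N),\chi)_{\mathbb{Z}}\otimes\mathbb{Z}/m\mathbb{Z}$ is a finite $\mathbb{Z}/m\mathbb{Z}$-module on which the Hecke operators $T_p$ (for $p\nmid Nm$) act. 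The Hecke algebra $\mathbb{T}_m$ they generate is therefore a finite commutative ring, and decomposes as a finite product of local rings $\mathbb{T}_{\mathfrak{m}}$, each with residue field a finite extension $\mathbb{F}_q$ of $\mathbb{F}_\ell$.

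Next I would associate, to each maximal ideal $\mathfrak{m}\subset\mathbb{T}_m$, the corresponding system of mod-$\ell$ Hecke eigenvalues $\{a_p\bmod\mathfrak{m}\}$ and the attached semisimple continuous Galois representation
\[
\bar{\rho}_{\mathfrak{m}}\colon \mathrm{Gal}(\overline{\mathbb{Q}}/\mathbb{Q})\longrightarrow \mathrm{GL}_2(\mathbb{F}_q),
\]
unramified outside $N\ell$, with $\mathrm{tr}\,\bar{\rho}_{\mathfrak{m}}(\mathrm{Frob}_p)\equiv a_p\pmod{\mathfrak{m}}$ for $p\nmid N\ell$ (Deligne--Serre/Deligne). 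The image of $\bar{\rho}_{\mathfrak{m}}$ is a finite subgroup $G_{\mathfrak{m}}\subset\mathrm{GL}_2(\mathbb{F}_q)$, and the Chebotarev density theorem then guarantees that for each conjugacy-invariant subset $C\subset G_{\mathfrak{m}}$, the set of primes $p$ with $\bar{\rho}_{\mathfrak{m}}(\mathrm{Frob}_p)\in C$ has a well-defined Dirichlet density $|C|/|G_{\mathfrak{m}}|$. In particular, applied to $C=\{g:\mathrm{tr}\,g=0\}$, this produces a positive-density set $S_{\mathfrak{m}}$ of primes on which $a_p\equiv 0\pmod{\mathfrak{m}}$. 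Taking a common refinement over the finitely many maximal ideals yields one set $S$ of primes of positive density $\delta>0$ such that $a_p(f)\equiv 0\pmod{\ell}$ for all $p\in S$.

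The next step is to promote this prime-level statement to an $n$-level statement. Writing $f$ (or rather each of its projections to the local components of $\mathbb{T}_m$) as a generalized eigenvector and applying the Hecke recurrence
\[
c(p^{k+1})=a_p\,c(p^k)-\chi(p)p^{\ell-1}c(p^{k-1})\qquad(p\nmid N),
\]
together with multiplicativity in coprime indices, one shows inductively that if a prime $p\in S$ divides $n$ to an odd power (or more generally, lies in an appropriate density-positive subset depending on the recurrence mod $\ell^r$), then $c(n)\equiv 0\pmod{\ell^r}$. Hence any $n$ with $c(n)\not\equiv 0\pmod{m}$ must have all its prime factors outside a density-positive set. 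A classical estimate of Landau (sharpened by Selberg--Delange) then gives
\[
\bigl|\{n\leq X:\text{all prime factors of }n\text{ lie outside a set of density }\delta\}\bigr|=\mathcal{O}\!\left(\frac{X}{(\log X)^{\delta}}\right),
\]
which yields the claimed bound with $\alpha=\delta$.

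The main obstacle is bridging Steps two and three in the presence of both non-trivial $\ell^r$-torsion in the Hecke algebra and the fact that $f$ need not itself be a Hecke eigenform: one must decompose along the primary components of $\mathbb{T}_m$ and control nilpotent contributions via the $\mathfrak{m}$-adic filtration on $\mathbb{T}_{\mathfrak{m}}$, promoting the mod-$\mathfrak{m}$ Chebotarev statement to a mod-$\ell^r$ statement by induction on the filtration length. Once that is arranged, the rest of the argument is a clean combination of Galois-theoretic density and elementary multiplicative number theory.
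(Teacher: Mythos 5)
The paper does not prove this statement; it is quoted as a black-box result, attributed to Serre via Ono's book, and used only as an ingredient in the proof of Theorem \ref{thm:m2}. So there is no internal proof to compare your proposal against. What you have sketched is in fact the architecture of Serre's own argument (Serre 1974/1976, as surveyed in Ono), and it is structurally sound: reduce to a prime power $m=\ell^r$, exploit the finiteness of the Hecke algebra acting on $M_\ell(\Gamma_0(N),\chi)_{\mathbb{Z}}\otimes\mathbb{Z}/m\mathbb{Z}$, attach a mod-$\ell$ Galois representation to each maximal ideal, harvest a positive-density set of primes via Chebotarev, and close with a Landau/Wirsing-type density estimate.

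Two points in your sketch deserve to be made explicit rather than waved at. First, the Chebotarev step only produces a positive-density set $S_{\mathfrak m}$ because the conjugacy-invariant set $C=\{g:\operatorname{tr}g=0\}$ is nonempty in the image $G_{\mathfrak m}$; this is not a priori obvious, but it follows from oddness of the representation (complex conjugation has trace $0$ when $\ell$ is odd) together with the observation that for $\ell=2$ the identity already has trace $2\equiv 0$. Second, the passage from the mod-$\mathfrak m$ statement to the mod-$\ell^r$ statement, and from eigenforms to an arbitrary $f$, is where the real work lies, and the ``Hecke recurrence'' $c(p^{k+1})=a_p c(p^k)-\chi(p)p^{\ell-1}c(p^{k-1})$ you invoke is only literally valid for genuine eigenforms. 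The cleaner formulation is: since $f\bmod\ell^r$ lies in the $\mathfrak m$-primary component of a finite module, it is annihilated by $\mathfrak m^{k_0}$ for some $k_0$; Chebotarev gives a positive-density set $S$ of primes $p$ with $T_p\in\mathfrak m$, so $T_{p_1}\cdots T_{p_{k_0}}f\equiv 0\pmod{\ell^r}$ for any $p_1,\dots,p_{k_0}\in S$; unwinding this operator identity coefficientwise shows $c(n)\equiv 0\pmod{\ell^r}$ once $n$ carries $k_0$ suitable prime divisors from $S$, after which the Landau estimate $O(X/(\log X)^{\delta})$ applies exactly as you say. You gesture at this (``induction on the filtration length'') but do not carry it through; nothing in your outline is a wrong turn, but these are the two places where a complete write-up must not cut corners.
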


We recall the definition of Hecke operators.
\begin{definition}
Let $m$ be a positive integer and $f(z) = \sum_{n \geq 0} a(n)q^n \in M_{k}(\Gamma_0(N),\chi)$, where $\chi$ is a Dirichlet character modulo $N$. Then the action of Hecke operator $T_m$ on $f(z)$ is defined by 
\begin{align*}
f(z)|T_m := \sum_{n \geq 0} \left(\sum_{d\mid \gcd(n,m)}\chi(d)d^{k-1}a\left(\frac{nm}{d^2}\right)\right)q^n.
\end{align*}
In particular, if $m=p$ is prime, we have 
\begin{align}\label{hecke}
f(z)|T_p := \sum_{n \geq 0} \left(a(pn)+\chi(p)p^{k-1}a\left(\frac{n}{p}\right)\right)q^n.
\end{align}
We note that $a(n)=0$ unless $n$ is a nonnegative integer.
\end{definition}
\begin{definition}\label{hecke2}
	A modular form $f(z)=\sum_{n \geq 0}a(n)q^n \in M_{k}(\Gamma_0(N),\chi)$ is called a Hecke eigenform if for every $m\geq2$ there exists a complex number $\lambda(m)$ for which 
	\begin{align}\label{hecke3}
	f(z)|T_m = \lambda(m)f(z).
	\end{align}
\end{definition}

In the special case when $\chi$ is the trivial character (i.e., $\chi(n) = 1$ for all $(n,N) = 1$), the Hecke operator simplifies to
\begin{align*}
f(z)|T_m := \sum_{n \geq 0} \left(\sum_{d\mid \gcd(n,m)}d^{k-1}a\left(\frac{nm}{d^2}\right)\right)q^n.
\end{align*}

\section{Proof of Theorem \ref{thm:equality}} \label{Proof-thm:equality}
Using \eqref{dis1byf1^4}, we have
\begin{align*}
    \sum_{n=0}^\infty\opt_4(n)q^n&=\dfrac{f_2^{12}}{f_1^{8}f_4^{4}}=\frac{f_4^{24}}{f_2^{16} f_8^8}+8q\frac{ f_4^{12} }{f_2^{12}}+16q^2\frac{f_8^8}{f_2^8},
\end{align*}
which gives
\begin{align}
    \sum_{n=0}^\infty\opt_4(2n)q^n&=\frac{f_2^{24}}{f_1^{16} f_4^8}+16q\frac{f_4^8}{f_1^8}.\label{For OPT4 and OPT8 1}
\end{align}
Again squaring \eqref{dis1byf1^4} and then replacing $q$ by $-q$, we have
\begin{align*}
    \frac{1}{f_1^8}&=\frac{f_4^{28}}{f_2^{28} f_8^8}+8q\frac{f_4^{16}}{f_2^{24}}+16q^2\frac{f_8^8 f_4^4}{f_2^{20}},\\
    \frac{f_1^8f_4^8}{f_2^{24}}&=\frac{f_4^{28}}{f_2^{28} f_8^8}-8q\frac{f_4^{16}}{f_2^{24}}+16q^2\frac{f_8^8 f_4^4}{f_2^{20}}.
\end{align*}
The two identities above imply
\begin{align*}
   \frac{f_2^{24}}{f_1^{16}f_4^8} &=  1+ 16q\frac{f_4^{8}}{f_1^{8}},
\end{align*}
which together with \eqref{For OPT4 and OPT8 1}, gives
\begin{align*}
    \sum_{n=0}^\infty\opt_4(2n)q^n&=2  \frac{f_2^{24}}{f_1^{16}f_4^8}-1=2\sum_{n=0}^\infty\opt_8(n)q^n-1.
\end{align*}
This equality proves Theorem \ref{thm:equality}. \qed

\section{Elementary proof of Theorem \ref{thm:3}}\label{sec:el2}
From \eqref{opt}, we recall
\begin{align}\label{n}
        \sum_{n\geq 0}\opt_3(n)q^n &=\frac{f_{2}^9}{f_{1}^6f_{4}^3}=  \frac{f_{2}^9}{f_{4}^3} \Bigg(\frac{f_{4}^{14}}{f_{2}^{14} f_{8}^4} + 4 q \frac{f_{4}^2 f_{8}^4}{f_{2}^{10}}\Bigg)\Bigg(\frac{f_{8}^5}{f_{2}^5 f_{16}^2} + 2 q \frac{f_{4}^2 f_{16}^2 }{f_{2}^5 f_{8}}\Bigg),
\end{align}
where we  have employed \eqref{dis1byf1^2} and \eqref{dis1byf1^4}. Now, working modulo $4$, we have,
\begin{align}
   \sum_{n\geq 0}\opt_3(n)q^n & \equiv \frac{f_{2}^7}{f_{4}^3} \Bigg(\frac{f_{8}^5}{f_{2}^5 f_{16}^2} + 2 q \frac{f_{4}^2 f_{16}^2 }{f_{2}^5 f_{8}}\Bigg) \pmod{4}.\label{opt(n)}
\end{align}
Extracting the terms involving $q^{2n+1}$, dividing both sides by $q$ and replacing $q^2$ by $q$, we have
\begin{align}
    \sum_{n\geq 0}\opt_3(2n+1)q^n &\equiv 2\frac{f_{1}^2 f_{8}^2}{f_{2}f_{4}} \equiv 2 f_{2}^6 \pmod{4}\label{opt(2n+14)}.
\end{align}

Now, extracting the terms involving $q^{2n}$ from \eqref{opt(n)} and replacing $q^2$ by $q$, we have 
\begin{align}\label{2n4}
     \sum_{n\geq 0}\opt_3(2n)q^n &\equiv \frac{f_{1}^2f_{4}^5}{f_{2}^3f_{8}^2}\equiv \frac{f_{4}^5}{f_{2}^3f_{8}^2}\Bigg(\frac{f_{2}f_{8}^5}{f_{4}^2f_{16}^2} - 2 q \frac{f_{2} f_{16}^2 }{f_{8}}\Bigg) \pmod{4},
\end{align}
where in the last step, we have employed \eqref{disf1^2}. Equations \eqref{opt(2n+14)} and \eqref{2n4} will be recalled in a while. We now move on to prove the congruences.

\emph{Proof of \eqref{eq9}}. From equations \eqref{ds-1} and \eqref{n2} for $n\geq0$, we have
\begin{align}
    \opt_3(3n+1) \equiv \opt_3(3n+2) \equiv 0 \pmod{6}.
\end{align}
This proves \eqref{eq9}. In fact, $\opt_3(3n+2) \equiv 0 \pmod{18}$. We prove this next. 

\emph{Proof of \eqref{eq12}}. Working modulo $9$, we obtain
\begin{align}
     \sum_{n\geq 0}\opt_3(n)q^n &\equiv \frac{f_{6}^3 }{f_{3}^3 f_{12}^3} f_{1}^3 \Bigg(f_{4}^3\Bigg)^2 \nonumber\\
     &\equiv \frac{f_{6}^3 }{f_{3}^3 f_{12}^3} \Bigg(\frac{f_{6}f_{9}^6}{f_{3}f_{18}^3}-3qf_{9}^3+4q^3 \frac{f_{3}^2f_{18}^6}{f_{6}^2f_{9}^3}\Bigg)\nonumber \\ &\quad \times \Bigg(\frac{f_{24}f_{36}^6}{f_{12}f_{72}^3}-3q^4f_{36}^3+4q^{12} \frac{f_{12}^2f_{72}^6}{f_{24}^2f_{36}^3}\Bigg)^2 \pmod{9},
\end{align}
where in the last step, we have employed \eqref{disf1^3}.

Extracting the terms involving $q^{3n+2}$, dividing both sides by $q^2$ and replacing $q^3$ by $q$, we arrive at
\begin{align}
    \opt_3(3n+2) &\equiv 18\frac{ f_2^3 f_3^3 f_{8} f_{12}^9 }{f_1^3 f_{4}^4 f_{24}^3} q+ 9\frac{ f_2^4 f_3^6 f_{12}^6 }{f_1^4 f_{4}^3 f_{6}^3} q^2+36\frac{ f_2 f_{6}^6 f_{12}^6 }{f_1 f_3^3 f_{4}^3} q^3+ 72\frac{ f_2^3 f_3^3 f_{24}^6 }{f_1^3 f_{4} f_{8}^2}q^{5} \nonumber\\
    &\equiv 0 \pmod{9}.\label{3n+29}
\end{align}
Hence, from \eqref{n2} and \eqref{3n+29}, we obtain 
\begin{align*}
     \opt_3(3n+2) \equiv 0 \pmod{18}.
\end{align*}
This proves \eqref{eq12}.

In the remainder of this section, we will use $\opt_3(3n+1) \equiv 0\pmod{3}$ and \eqref{3n+29} above without commentary.

\emph{Proof of \eqref{eq10}}. Next, we have 
\begin{align}
    \opt_3(12n+7) = \opt_3(4(3n+1)+3) \equiv 0 \pmod{4},\label{12n+74}
\end{align}
using \cite[Theorem 1]{SSS}.
Also, 
\begin{align}
    \opt_3(12n+7) = \opt_3(3(4n+2)+1) \equiv 0 \pmod{3}.\label{12n+73}
\end{align}
From \eqref{12n+74} and \eqref{12n+73}, we conclude
\begin{align*}
     \opt_3(12n+7) \equiv 0 \pmod{12}.
\end{align*}
This proves \eqref{eq10}.

\emph{Proof of \eqref{eq11}}. Again, extracting the terms involving odd powers of $q$ from \eqref{2n4}, we have
\begin{align*}
     \sum_{n\geq 0}\opt_3(4n+2)q^n &\equiv 2\frac{f_{2}^5f_{8}^2}{f_{1}^2f_{4}^3}\equiv 2f_{4}^3 \equiv 2\Bigg(\frac{f_{24}f_{36}^6}{f_{12}f_{72}^3}-3q^4f_{36}^3+4q^{12} \frac{f_{12}^2f_{72}^6}{f_{24}^2f_{36}^3}\Bigg) \pmod{4}.
\end{align*}
Extracting the terms involving $q^{3n+2}$, dividing both sides by $q^2$ and replacing $q^3$ by $q$, we obtain
\begin{align}
    \opt_3(12n+10) \equiv 0 \pmod{4}\label{12n+104}
\end{align}
and 
\begin{align}
    \opt_3(12n+10) = \opt_3(3(4n+3)+1) \equiv 0 \pmod{3}. \label{12n+103}
\end{align}
Combining \eqref{12n+103} and \eqref{12n+104}, we arrive at
\begin{align*}
     \opt_3(12n+10) \equiv 0 \pmod{12}.
\end{align*}
This proves \eqref{eq11}.

\emph{Proof of \eqref{eq15}}. Next, we have
\begin{align}
    \opt_3(6n+5) = \opt_3(3(2n+1)+2) \equiv 0 \pmod{9}.\label{12n+59}
\end{align}
Also, from \eqref{opt(2n+14)}, we recall
\begin{align}
     \sum_{n\geq 0}\opt_3(2n+1)q^n &\equiv 2 f_{4}^3 \pmod{4}.\label{cong6n+51}
\end{align}
Employing  \eqref{disf1^3} in \eqref{cong6n+51} and extracting the odd powered terms of $q$, we obtain
\begin{align*}
         \sum_{n\geq 0}\opt_3(2n+1)q^n &\equiv 2 f_{4}^3 \equiv 2 \Bigg(\frac{f_{24}f_{36}^6}{f_{12}f_{72}^3}-3q^4f_{36}^3+4q^{12} \frac{f_{12}^2f_{72}^6}{f_{24}^2f_{36}^3}\Bigg) \pmod{4}.
\end{align*}
Extracting the terms involving $q^{3n+2}$, we obtain
\begin{align}
    \opt_3(6n+5) \equiv 0 \pmod{4}.\label{12n+54}
\end{align}
Combining $\eqref{12n+54}$ and $\eqref{12n+59}$, we obtain
\begin{align*}
    \opt_3(6n+5) \equiv 0 \pmod{36}.
\end{align*}
This proves \eqref{eq15}.

\emph{Proof of \eqref{24n+23}}. Using \eqref{eq12}, we know that $\opt_3(24n+23)\equiv 0\pmod9$. So, to prove \eqref{24n+23} we only need to check whether the desired congruence is divisible by 16. Using \eqref{n}, we have
\begin{align*}
    \sum_{n=0}^{\infty}\opt_3(n)q^n &= \frac{f_{2}^9}{f_{1}^6f_{4}^3}\equiv \frac{f_{1}^{16}f_{2}}{f_{1}^6f_{4}^3}\equiv \frac{f_{2}}{f_{4}^3} \cdot f_{1}^2 \cdot \left(f_{1}^4\right)^2\\
    &\equiv \frac{f_{2}}{f_{4}^3}  \left(\frac{f_{4}^{10}}{f_{2}^2f_{8}^4} - 4 q \frac{f_{2}^2 f_{8}^4}{f_{4}^2}\right)^2 \left(\frac{f_{2}f_{8}^5}{f_{4}^2f_{16}^2} - 2 q \frac{f_{2} f_{16}^2 }{f_{8}}\right) \pmod{16}.
\end{align*}
Extracting the terms involving $q^{2n+1}$, dividing both sides by $q$ and replacing $q^2$ by $q$ , we have
\begin{align*}
     \sum_{n=0}^{\infty}\opt_3(2n+1)q^n \equiv 8\frac{ f_1^2 f_2^3 f_4^5}{f_{8}^2} - 2 \frac{ f_2^{17} f_{8}^2 }{f_1^2 f_4^9}\pmod{16}.
\end{align*}
Employing \eqref{disf1^2} and \eqref{dis1byf1^2} and extracting the odd powered terms of $q$, we obtain
\begin{align*}
     \sum_{n=0}^{\infty}\opt_3(4n+3)q^n \equiv - 4 \frac{f_{1}^{12} f_4 f_8^2}{f_2^7}\pmod{16}.
\end{align*}
Using \eqref{disf1^4}, we extract the terms involving $q^{2n+1}$ to arrive at
\begin{align*}
      \sum_{n=0}^{\infty}\opt_3(8n+7)q^n &\equiv 48 \frac{f_2^{19}}{f_1^9 f_4^2}+256 q \frac{f_4^{14}}{f_1 f_2^5} \equiv 0\pmod{16}.
\end{align*}
This completes the proof of \eqref{24n+23}. \qed

\section{Proof of Theorem \ref{singlemodcong}}\label{sec:radu1}
Combining the results \cite[Theorem 7]{SSS} and \cite[Theorem 1.1]{HSellers}, we know that \[\opt_3(n)\equiv 0 \pmod{4}\] for all $n\ge 1$ if and only if $n$ is neither a square nor twice a square. So, proving \eqref{Family 1 mod 4} is equivalent to proving that $3pn+R$ is neither a square nor twice a square for all $n\ge0$, primes $p\ge5$, and $R$ defined in Theorem \ref{singlemodcong}. First, note that since $Ap\equiv2r+1\pmod{3}$,
\begin{align*}
    R
& \equiv \begin{cases}
    2(3r+1) \pmod{3}  & \quad \text{if}\ 2(Ap+r)<3p,\\
     2(3r+1)-3p  \pmod{3} & \quad \text{if}\ 2(Ap+r)>3p
\end{cases}\\
& \equiv \begin{cases}
    2 \pmod{3}  & \quad \text{if}\ 2(Ap+r)<3p,\\
     2  \pmod{3} & \quad \text{if}\ 2(Ap+r)>3p.
\end{cases}
\end{align*}
Therefore, $3pn+R\equiv2\pmod{3}$ is never a square for all $n\ge0$, primes $p\ge5$, and $R$.

If $R=2(Ap+r)$, then we have the following two cases depending on the parity of $n$. 
\begin{enumerate}
    \item[1.] When $n=2m$ for some $m$, $3pn+R=2(p(3m+A)+r)$ is not twice a square since $p(3m+A)+r$ can not be a square.
    \item[2.] When $n$ is odd, $3pn+R$ is odd. Therefore, $3pn+R$ is not twice a square.
\end{enumerate}

If $R=2(Ap+r)-3p$, then again we have the following two cases depending on the parity of $n$.
\begin{enumerate}
    \item[1.] When $n$ is even, $3pn+R$ is odd. Therefore, $3pn+R$ is not twice a square.
    \item[2.] When $n=2m+1$ for some $m$, $3pn+R=2(p(3m+A)+r)$ is not twice a square since $p(3m+A)+r$ can not be a square.
\end{enumerate}

Thus, $3pn+R$ is neither a square nor twice a square for any value of $n$, $p$, and $R$. This completes the proof of Theorem \ref{singlemodcong}. \qed

\section{Proof of Theorem \ref{thm:conj}}\label{sec:conj}

    We note that the first congruence \eqref{conjp-1} follows immediately from Theorem \ref{thm:k2}. We prove the second congruence \eqref{conjp-2} next.

Using \eqref{dis1byf1^2}, we have
\begin{align*}
\sum_{n=0}^\infty \overline{OPT}_{2^i\cdot r}(n)q^n&=\dfrac{f_2^{3\cdot 2^i\cdot r}}{f_4^{2^i\cdot r}}\left(\dfrac{1}{f_1^2}\right)^{2^i\cdot r}=\dfrac{f_2^{3\cdot 2^i\cdot r}}{f_4^{2^i\cdot r}}\left(\frac{f_8^5}{f_2^5 f_{16}^2}+2q\frac{f_4^2 f_{16}^2}{f_2^5 f_8}\right)^{2^i\cdot r}\\
&=\sum_{k=0}^{2^i\cdot r}2^k\binom{2^i\cdot r}{k}q^k\dfrac{f_8^{5\cdot 2^i\cdot r-6 k}}{f_2^{2^{i+1}\cdot r}f_4^{2^i\cdot r-2k}f_{16}^{2^{i+1}\cdot r-4k}},
\end{align*}
from which
\begin{align*}
\sum_{n=0}^\infty \overline{OPT}_{2^i\cdot r}(2n+1)q^n
&=\sum_{k=0}^{2^{i-1}\cdot r-1/2}2^{2k+1}\binom{2^i\cdot r}{2k+1}q^k\dfrac{f_4^{5\cdot 2^i\cdot r-12k-6}}{f_1^{2^{i+1}\cdot r}f_2^{2^i\cdot r-4k-2}f_{8}^{2^{i+1}\cdot r-8k-4}}\\
&=\sum_{k=0}^{2^{i-1}\cdot r-1/2}2^{2k+1}\binom{2^i\cdot r}{2k+1}q^k\dfrac{f_4^{5\cdot 2^i\cdot r-12k-6}}{f_2^{2^i\cdot r-4k-2}f_{8}^{2^{i+1}\cdot r-8k-4}}\cdot \left(\dfrac{1}{f_1^2}\right)^{2^{i}\cdot r}\\
&=\sum_{k=0}^{2^{i-1}\cdot r-1/2}\sum_{t=0}^{2^{i}\cdot r}2^{2k+t+1}\binom{2^i\cdot r}{2k+1}\binom{2^i\cdot r}{t}\\ &\quad \times q^{k+t}\dfrac{f_4^{5\cdot 2^i\cdot r-12k+2t-6} f_8^{3\cdot 2^i\cdot r+8k-6t+4}}{f_2^{3\cdot 2^{i+1}\cdot r-4k-2}f_{16}^{2^{i+1}\cdot r-4t}},
\end{align*}
which is equivalent to
\begin{align*}
\sum_{n=0}^\infty \overline{OPT}_{2^i\cdot r}(2n+1)q^n
&=\sum_{\alpha=0,\beta=0}^1\sum_{k=0}^{2^{i-2}\cdot r-1/4-\alpha/2}\sum_{t=0}^{2^{i-1}\cdot r-\beta/2}2^{4k+2t+2\alpha+\beta+1}\binom{2^i\cdot r}{4k+2\alpha+1}\\
&\quad\times\binom{2^i\cdot r}{2t+\beta}q^{2k+2t+\alpha+\beta}\dfrac{f_4^{5\cdot 2^i\cdot r-24k+4t-12\alpha+2\beta-6} f_8^{3\cdot 2^i\cdot r+16k-12t+8\alpha-6\beta+4}}{f_2^{3\cdot 2^{i+1}\cdot r-8k-4\alpha-2}f_{16}^{2^{i+1}\cdot r-8t-4\beta}}.
\end{align*}
From the above identity, we extract the terms that involve odd exponents of $q$, 
\begin{align}
\sum_{n=0}^\infty \overline{OPT}_{2^i\cdot r}(4n+3)q^n
&=\sum_{\substack{\alpha,\beta\\\alpha+\beta=1}}^1\sum_{k=0}^{2^{i-2}\cdot r-1/4-\alpha/2}\sum_{t=0}^{2^{i-1}\cdot r-\beta/2}2^{4k+2t+2\alpha+\beta+1}\binom{2^i\cdot r}{4k+2\alpha+1}\notag\\
&\quad\times\binom{2^i\cdot r}{2t+\beta}q^{k+t}\dfrac{f_2^{5\cdot 2^i\cdot r-24k+4t-12\alpha+2\beta-6} f_4^{3\cdot 2^i\cdot r+16k-12t+8\alpha-6\beta+4}}{f_1^{3\cdot 2^{i+1}\cdot r-8k-4\alpha-2}f_{8}^{2^{i+1}\cdot r-8t-4\beta}}.\label{Correct 1}
\end{align}
By Lemma \ref{lem1}, we have
\begin{align}
2^{4k+2t+2\alpha+\beta+1}\binom{2^i\cdot r}{4k+2\alpha+1}
\binom{2^i\cdot r}{2t+\beta}&\equiv 0 \pmod{2^{i+3}}\label{Correct 2}
\end{align}
for the tuples $(\alpha,\beta)=(0,1)$ and $(1,0)$ except when $k=0$, $t=0$, and $\beta=0$. So, we consider 
\begin{align*}
    2^{4\cdot 0+2\cdot 0+2\cdot 1+0+1}\binom{2^i\cdot r}{4\cdot 0+2\cdot 1+1}
\binom{2^i\cdot r}{2\cdot 0+0}&= 2^3\cdot \dfrac{2^i\cdot r\left(2^{i}\cdot r-1\right)\left(2^{i-1}\cdot r-1\right)}{3}\\
&\equiv 0 \pmod{2^{i+3}}.
\end{align*}
Therefore, \eqref{Correct 1} and \eqref{Correct 2} give \eqref{conjp-2}. 

The third congruence \eqref{conjp-3} follows from a result analogous to a result of Keister, Vary and Sellers \cite[Theorem 9]{KeisterSellersVary}. \qed

\begin{lemma}\label{lem:new}
    Let $k=2^mr$, $m> 0$ and $r$ be odd, then for all $n\geq 1$ we have
\[
        \opt_{2^mr}(n)\equiv \begin{cases} 2^{m+1}\pmod {2^{m+2}} &\text{if $n$ is a square, twice a square or four times a square,}\\0 \pmod {2^{m+2}} & \text{otherwise}.
    \end{cases}
    \]
\end{lemma}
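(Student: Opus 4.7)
My plan is to expand the generating series $\sum_{n \geq 0} \opt_{2^m r}(n) q^n = f(q)^{2^m r}$ modulo $2^{m+2}$, where $f(q) := f_2^3/(f_1^2 f_4)$ is the generating function of $\opt_1(n)$, and then read off the coefficients. The engine is Lemma \ref{lem2}: it gives $f(q) \equiv 1 + 2\alpha(q) \pmod 4$, with
\[
\alpha(q) \;:=\; \sum_{k \geq 1} q^{k^2} \;+\; \sum_{k \geq 1} q^{2k^2}.
\]

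First I would lift this mod-$4$ approximation of $f$ to a mod-$2^{m+2}$ approximation of $f^{2^m r}$. Writing $f = (1 + 2\alpha) + 4h$ with $h \in \mathbb{Z}[[q]]$ and expanding $f^{2^m r}$ binomially, the bound $v_2\!\left(\binom{2^m r}{j}\right) \geq m - v_2(j)$ combined with $v_2((4h)^j) = 2j$ forces every term with $j \geq 1$ to have $2$-adic valuation at least $m+2$. Hence $f(q)^{2^m r} \equiv (1 + 2\alpha(q))^{2^m r} \pmod{2^{m+2}}$.

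Second I would expand $(1 + 2\alpha)^{2^m r}$ term-by-term. By Lemma \ref{lem1} applied to $\binom{2^m r}{j} \cdot 2^j$, all $j \geq 3$ contributions vanish modulo $2^{m+2}$. The $j=1$ term is $2^{m+1} r \alpha \equiv 2^{m+1} \alpha$, and the $j=2$ term is $2^{m+1} r(2^m r - 1)\alpha^2 \equiv 2^{m+1}\alpha^2$ (both reductions use $r$ odd and $m \geq 1$, so the attached integer factor is odd modulo $2$). This gives
\[
\sum_{n \geq 0} \opt_{2^m r}(n) q^n \;\equiv\; 1 \;+\; 2^{m+1}\bigl(\alpha(q) + \alpha(q)^2\bigr) \pmod{2^{m+2}}.
\]

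The last step is to compute $\alpha(q) + \alpha(q)^2$ modulo $2$. The Frobenius congruence $\alpha(q)^2 \equiv \alpha(q^2) \pmod 2$ reduces this to evaluating $\alpha(q) + \alpha(q^2) \pmod 2$; after cancelling the duplicate $\sum q^{2k^2}$ contributions and the overlap $\sum_{k \text{ even}} q^{k^2} = \sum q^{4k^2}$, one is left with $\sum_{k \geq 1,\,k \text{ odd}} q^{k^2}$. Reading off the coefficient of $q^n$ in the previous display then produces the dichotomy claimed in the lemma. The main (and essentially only) obstacle is this final cancellation bookkeeping; it also reveals that the natural set of exceptional $n$ coming out of the calculation is exactly the odd squares, a fact that must be reconciled with the case split written in the lemma.
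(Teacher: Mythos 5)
Your proof is correct, and it is more direct than the paper's: rather than inducting on $m$, you get the expansion modulo $2^{m+2}$ uniformly in $m$ and $r$ from the binomial theorem and the valuation bound $v_2\left(\binom{2^m r}{j}\right)\ge m-v_2(j)$. The discrepancy you flag at the end is not a defect of your argument; the exceptional set really is the set of odd squares, and the case split printed in Lemma~\ref{lem:new} is wrong as stated. For instance $\opt_2(2)=\opt_1(0)\opt_1(2)+\opt_1(1)^2+\opt_1(2)\opt_1(0)=2+4+2=8\equiv 0\pmod 8$, yet $2=2\cdot1^2$ is twice a square, so the lemma would predict $\opt_2(2)\equiv 4\pmod 8$; likewise $\opt_2(4)=32\equiv 0\pmod 8$ although $4$ is a square. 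The slip in the paper's own base-case derivation is that the two copies of $4\sum_{k\ge1}q^{2k^2}$ (one from the linear term, one from the reduced square of $\sum_{k\ge1}q^{k^2}$) get recorded as $4\sum_{k\ge1}q^{2k^2}$ instead of $8\sum_{k\ge1}q^{2k^2}\equiv 0\pmod 8$; carrying out that cancellation, and then folding $\sum_{k\ \text{even}}q^{k^2}=\sum_{k\ge1}q^{4k^2}$ into $\sum_{k\ge1}q^{k^2}$, leaves precisely $1+4\sum_{k\ \text{odd}}q^{k^2}\pmod 8$, which is your answer. The only downstream use of the lemma, the proof of~\eqref{conjp-3}, is unaffected since $8n+5\not\equiv1\pmod 8$ is never an odd square (and your corrected exceptional set is in any case a subset of the one claimed). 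One small repair to your write-up: Lemma~\ref{lem1} as stated concerns $\binom{2^m}{n}$, not $\binom{2^m r}{n}$; to kill the $j\ge3$ terms of $(1+2\alpha)^{2^m r}$, invoke $v_2\left(\binom{2^m r}{j}\right)\ge m-v_2(j)$ directly (it follows from $j\binom{2^m r}{j}=2^m r\binom{2^m r-1}{j-1}$) and use $j-v_2(j)\ge2$ for $j\ge3$, exactly as you already handle the $(4h)^j$ terms via $2j-v_2(j)\ge2$.
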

\begin{proof}
We prove the result by induction on $m$. The base case $m=1$ is given first, where we show
\[
        \opt_{2r}(n)\equiv \begin{cases} 4\pmod 8\quad \text{if $n$ is a square, twice a square or four times a square,}\\0 \pmod8 \quad \text{otherwise}.
    \end{cases}
    \]
    We have
    \begin{align*}
        \sum_{n\geq 0}\opt_{2r}(n)q^n&=\left(\left(\sum_{n\geq 0}\opt_1(n)q^n\right)^2\right)^r\\
        &=\left(\left(1+\sum_{\mycom{n>0}{\text{$n$ is a square}}}\opt_1(n)q^n+\sum_{\mycom{n>0}{\text{$n$ is not a square}}}\opt_1(n)q^n\right)^2\right)^r.
    \end{align*}
We expand the above square and look at each term separately:
\begin{multline*}
    1+\left(\sum_{\mycom{n>0}{\text{$n$ is a square}}}\opt_1(n)q^n\right)^2+\underbrace{\left(\sum_{\mycom{n>0}{\text{$n$ is not a square}}}\opt_1(n)q^n\right)^2}_{A}+2\left(\sum_{\mycom{n>0}{\text{$n$ is a square}}}\opt_1(n)q^n\right)\\
    +\underbrace{2\left(\sum_{\mycom{n>0}{\text{$n$ is not a square}}}\opt_1(n)q^n\right)}_{B}+\underbrace{2\left(\sum_{\mycom{n>0}{\text{$n$ is a square}}}\opt_1(n)q^n\right)\left(\sum_{\mycom{n>0}{\text{$n$ is not a square}}}\opt_1(n)q^n\right)}_{C}.
\end{multline*}

First, we look at the term labelled $A$:
\begin{align*}
    A&=\left(\sum_{\mycom{n>0}{n=2\times \text{ a square}}}\opt_1(n)q^n+\sum_{\mycom{n>0}{n\neq 2\times \text{ a square or a square}}}\opt_1(n)q^n\right)^2\\
    &\equiv \left(\sum_{\mycom{n>0}{n=2\times \text{a square}}}\opt_1(n)q^n\right)^2 \pmod 8,
\end{align*}
where we have used Lemma \ref{lem2} and \ref{lem3} to obtain the last step. Similarly, another application of Lemma \ref{lem2} would give us
\[
B\equiv 2\left(\sum_{\mycom{n>0}{n=2\times \text{a square}}}\opt_1(n)q^n\right).
\]
Finally, the term labelled $C$ gives us
\begin{align*}
    C&=2\left(\sum_{\mycom{n>0}{n=2\times \text{a square}}}\opt_1(n)q^n+\sum_{\mycom{n>0}{n\neq 2\times \text{a square or a square}}}\opt_1(n)q^n\right)\left(\sum_{\mycom{n>0}{n= \text{a square}}}\opt_1(n)q^n\right)\\
    &\equiv 2\left(\sum_{\mycom{n>0}{n=2\times \text{a square}}}\opt_1(n)q^n\right)\left(\sum_{\mycom{n>0}{n= \text{a square}}}\opt_1(n)q^n\right) \pmod 8\\
    &\equiv 0 \pmod 8.
\end{align*}
Here we have applied Lemma \ref{lem2} to obtain each of the two steps.

Collecting all of the above together, we obtain
\begin{align*}
    \sum_{n\geq 0}\opt_{2r}(n)q^n&=\Biggl(1+\left(\sum_{\mycom{n>0}{n= \text{a square}}}\opt_1(n)q^n\right)^2+2\left(\sum_{\mycom{n>0}{n=\text{a square}}}\opt_1(n)q^n\right)\\
    &\quad +\left(\sum_{\mycom{n>0}{n=2\times \text{a square}}}\opt_1(n)q^n\right)^2+2\left(\sum_{\mycom{n>0}{n=2\times \text{a square}}}\opt_1(n)q^n\right)\Biggl)^r\pmod 8\\
    &=\Biggl(1+\left(\sum_{n\geq 1}\opt_1(n^2)q^{n^2}\right)^2+\left(\sum_{n\geq 1}\opt_1(2n^2)q^{2n^2}\right)^2\\
    &\quad +2\left(\sum_{n\geq 1}\opt_1(n^2)q^{n^2}\right)+2\left(\sum_{n\geq 1}\opt_1(2n^2)q^{2n^2}\right)\Biggl)^r\pmod 8.
\end{align*}
Applying Lemma \ref{lem2} in the above we obtain
\begin{equation*}
     \sum_{n\geq 0}\opt_{2r}(n)q^n=\left(1+4\sum_{n\geq 1}q^{n^2}+4\sum_{n\geq 1}q^{2n^2}+4\left(\sum_{n\geq 1}q^{n^2}\right)^2+4\left(\sum_{n\geq 1}q^{2n^2}\right)^2\right)^r \pmod 8.
\end{equation*}
Using the fact that $(q^{n_1}+q^{n_2}+\cdots )^2=(q^{2n_1}+q^{2n_2}+\cdots )+2(q^{n_1+n_2}+\cdots)$ in the above, we obtain
\begin{align*}
    \sum_{n\geq 0}\opt_{2r}(n)q^n&=\Biggl(1+4\sum_{n\geq 1}q^{n^2}+4\sum_{n\geq 1}q^{2n^2}+4\left(\sum_{n\geq 1}q^{2n^2}+2\sum_{\mycom{n_1,n_2>0}{n_1\neq n_2}}q^{n_1^2+n_2^2}\right)\\
    &\quad +4\left(\sum_{n\geq 1}q^{4n^2}+2\sum_{\mycom{n_1,n_2>0}{n_1\neq n_2}}q^{2n_1^2+2n_2^2}\right)\Biggl)^r\pmod 8\\
    &=\left(1+4\left(\sum_{m\geq 1}q^{m^2}+\sum_{n\geq 1}q^{2n^2}+\sum_{k\geq 1}q^{4k^2}\right)\right)^r\pmod8\\
    &=\sum_{j\geq 0}\binom{r}{j}4^j\left(\sum_{m\geq 1}q^{m^2}+\sum_{n\geq 1}q^{2n^2}+\sum_{k\geq 1}q^{4k^2}\right)^j\\
    &\equiv 1+4\left(\sum_{m\geq 1}q^{m^2}+\sum_{n\geq 1}q^{2n^2}+\sum_{k\geq 1}q^{4k^2}\right)\pmod 8.
\end{align*}
The last step follows since $r$ is odd. This proves the base case.

For the induction step, we assume
\[
        \opt_{2^mr}(n)\equiv \begin{cases} 2^{m+1}\pmod {2^{m+2}}& \text{if $n$ is a square, twice a square or four times a square,}\\0 \pmod {2^{m+2}} & \text{otherwise}.
    \end{cases}
    \]
    and show that
    \[
        \opt_{2^{m+1}r}(n)\equiv \begin{cases} 2^{m+2}\pmod {2^{m+3}}& \text{if $n$ is a square, twice a square or four times a square,}\\0 \pmod {2^{m+3}} & \text{otherwise}.
    \end{cases}
    \]
    We have
    \begin{align*}
        \sum_{n\geq 0}&\opt_{2^{m+1}r}(n)q^n\\
        &=\left(\sum_{n\geq 0}\opt_{2^{m}r}(n)q^n\right)^2\\
        &=\left(1+\sum_{\mycom{n> 0}{n\neq 1\times \text{or}~2\times \text{or}~4\times \text{square}}}\opt_{2^{m}r}(n)q^n+\sum_{\mycom{n> 0}{n= 1\times \text{or}~2\times \text{or}~4\times \text{square}}}\opt_{2^{m}r}(n)q^n\right)^2\\
        &=1+\left(\sum_{\mycom{n> 0}{n\neq 1\times \text{or}~2\times \text{or}~4\times \text{square}}}\opt_{2^{m}r}(n)q^n\right)^2+\left(\sum_{\mycom{n> 0}{n= 1\times \text{or}~2\times \text{or}~4\times \text{square}}}\opt_{2^{m}r}(n)q^n\right)^2\\
        &\quad +2 \left(\sum_{\mycom{n> 0}{n\neq 1\times \text{or}~2\times \text{or}~4\times \text{square}}}\opt_{2^{m}r}(n)q^n\right)+2\left(\sum_{\mycom{n> 0}{n= 1\times \text{or}~2\times \text{or}~4\times \text{square}}}\opt_{2^{m}r}(n)q^n\right)\\
        &\quad +2\left(\sum_{\mycom{n>0}{n\neq 1\times \text{or}~2\times \text{or}~4\times \text{square}}}\opt_{2^{m}r}(n)q^n\right) \times \left(\sum_{\mycom{n> 0}{n= 1\times \text{or}~2\times \text{or}~4\times \text{square}}}\opt_{2^{m}r}(n)q^n\right).
    \end{align*}
    Using a similar technique like the base case and using the induction hypothesis we arrive at
    \begin{align*}
       \sum_{n\geq 0}\opt_{2^{m+1}r}(n)q^n&\equiv 1+2\sum_{n\geq 1}\opt_{2^mr}(n^2)q^{n^2}+2\sum_{n\geq 1}\opt_{2^mr}(2n^2)q^{2n^2}\\
       &\quad +2\sum_{n\geq 1}\opt_{2^mr}(4n^2)q^{4n^2}+\Biggl(\sum_{n\geq 1}\opt_{2^mr}(n^2)q^{n^2}\\
       &\quad +\sum_{n\geq 1}\opt_{2^mr}(2n^2)q^{2n^2}
        + \sum_{n\geq 1}\opt_{2^mr}(4n^2)q^{4n^2}\Biggl)^2 \pmod{2^{m+3}}\\
       &\equiv 1+2\Biggl(\sum_{n\geq 1}\opt_{2^mr}(n^2)q^{n^2}+\sum_{n\geq 1}\opt_{2^mr}(2n^2)q^{2n^2}+\\
       & \quad +\sum_{n\geq 1}\opt_{2^mr}(4n^2)q^{4n^2}\Biggl) \pmod{2^{m+3}}.
    \end{align*}
    From the induction hypothesis, the coefficients of the last term are congruent to $2^{m+1}$ or $2^{m+1}+2^{m+2}$ or $2^{m+1}+2^{m+2}+2^{m+3} \pmod{2^{m+3}}$, and with the factor of $2$ in front of it, we finally arrive at
    \begin{equation}
         \sum_{n\geq 0}\opt_{2^{m+1}r}(n)q^n\equiv 1+2^{m+2}\left(\sum_{n\geq 1}q^{n^2}+\sum_{m\geq 1}q^{2m^2}+\sum_{k\geq 1}q^{4k^2}\right) \pmod{2^{m+3}}.
    \end{equation}
    This completes the induction.
\end{proof}

The third congruence \eqref{conjp-3} follows easily from Lemma \ref{lem:new}. Clearly, $8n+5$ is not twice or four times a square.  Also, $8n+5$ is not a square as it is odd and odd squares are $\equiv 1 \pmod 8$.

\section{Proof of Theorem \ref{thm:KSV:mod3}}\label{sec:KSV:mod3}

    We have
    \begin{align*}
        \sum_{n=0}^\infty \overline{OPT}_{3^i}(n)q^n&=\dfrac{f_2^{3\cdot 3^i}}{f_1^{2\cdot 3^i}f_4^{3^i}}=\dfrac{f_{-1}^{3^i}}{f_1^{3^i}},
    \end{align*}
    where for positive odd integers $k$, we take $f_{-k}:=\left(-q^k;-q^k\right)_\infty$ and it is known that $f_{-k}=f_{2k}^3/\left(f_kf_{4k}\right)$. In the above identity, we invoke the 3-dissections \eqref{3df1^3} and \eqref{3d1/f1^3} and then use binomial expansions to arrive at
    \begin{align*}
        &\sum_{n=0}^\infty \overline{OPT}_{3^i}(n)q^n\\
        &=\left(a_{-3}f_{-3}+3q f_{-9}^3\right)^{3^{i-1}}\times\left(a_3^2\frac{f_9^3}{f_3^{10}}+3 a_3 q \frac{f_9^6}{f_3^{11}}+9 q^2\frac{f_9^9}{f_3^{12}}\right)^{3^{i-1}}\\
        &=\left(\sum_{t=0}^{3^{i-1}}3^t\binom{3^{i-1}}{t}q^ta_{-3}^{3^{i-1}}f_{-3}^{3^{i-1}-t}f_{-9}^{3t}\right)\\&\quad \times \left(\sum_{\substack{\ell=0,m=0,r=0\\ \ell+m+r=3^{i-1}}}^{3^{i-1}}3^{m+2r}\binom{3^{i-1}}{\ell,m,r}q^{m+2r}a_{3}^{3^{2\ell+m}}\dfrac{f_9^{3\ell+9m+6r}}{f_3^{10\ell+11m+12r}}\right)\\
        &=\left(\sum_{\alpha=0}^{2}\sum_{t=0}^{3^{i-2}-\alpha/3}3^{3t+\alpha}\binom{3^{i-1}}{3t+\alpha}q^{3t+\alpha}a_{-3}^{3^{i-1}}f_{-3}^{3^{i-1}-3t-\alpha}f_{-9}^{9t+3\alpha}\right) \\
        & \quad \times \Bigg(\sum_{\beta=0,\gamma=0}^{2}\sum_{\substack{\ell=0,m=0,r=0\\ \ell+3m+3r+\beta+\gamma=3^{i-1}}}^{3^{i-1},~ 3^{i-2}-\beta/3,~ 3^{i-2}-\gamma/3} 3^{3m+6r+\beta+2\gamma}\binom{3^{i-1}}{\ell,3m+\beta,3r+\gamma}\\
        &\quad 
        \times q^{3m+6r+\beta+2\gamma} a_{3}^{3^{2\ell+3m+\beta}} \dfrac{f_9^{3\ell+27m+18r+9\beta+6\gamma}}{f_3^{10\ell+33m+36r+11\beta+12\gamma}}\Bigg).
    \end{align*}
 From the above product of sums, extracting the terms that involve $q^{3n+2}$, we obtain
\begin{align}
         &\sum_{n=0}^\infty \overline{OPT}_{3^i}(3n+2)q^{n}\notag\\
         &= \sum_{\substack{\alpha,\beta,\gamma\\ \alpha+\beta+2\gamma~\equiv~ 2~(\textup{mod}~3)}}^{2}\sum_{\substack{t, \ell,m,r\\ \ell+3m+3r+1=3^{i-1}}}^{3^{i-2}-\alpha/3,~3^{i-1},~ 3^{i-2}-\beta/3,~ 3^{i-2}-\gamma/3} 3^{3t+3m+6r+\alpha+\beta+2\gamma} \binom{3^{i-1}}{3t+\alpha}\notag\\
         &\quad\times \binom{3^{i-1}}{\ell,3m+\beta,3r+\gamma}q^{t+m+2r+\alpha/3+\beta/3+2\gamma/3-2/3}a_{-1}^{3^{i-1}} a_{1}^{3^{2\ell+3m+\beta}}f_{-1}^{3^{i-1}-3t-\alpha}f_{-3}^{9t+3\alpha} \notag\\
         &\quad\times\dfrac{f_3^{3\ell+27m+18r+9\beta+6\gamma}}{f_1^{10\ell+33m+36r+11\beta+12\gamma}}.\label{OPT3i:3n+2}
\end{align}
Now, it remains to be examined if
\begin{align}
    3^{3t+3m+6r+\alpha+\beta+2\gamma}\binom{3^{i-1}}{3t+\alpha}\binom{3^{i-1}}{\ell,3m+\beta,3r+\gamma}&\equiv0\pmod{3^{i+1}},\label{Mixed:KSV}
\end{align}
for the tuples \[(\alpha,\beta,\gamma)\in \{(0,0,1),(0,1,2),(0,2,0),(1,0,2),(1,1,0),(1,2,1),(2,0,0),(2,1,1),(2,2,2)\}.\] This follows immediately from Lemma \ref{modified-ksv} for $1\leq t\leq 3^{i-1}-1$. Next, for the case $t=0$ and $\alpha=0$, we need to check whether
\begin{align}
     3^{3m+6r+\beta+2\gamma}\binom{3^{i-1}}{\ell,3m+\beta,3r+\gamma}\equiv0\pmod{3^{i+1}}. \label{Mixed:KSV2}
\end{align}
This can easily be seen as we have $\alpha=0$ and hence $\beta+2\gamma$ must be at least 2. Also, $\binom{3^{i-1}}{\ell,3m+\beta,3r+\gamma}$ is divisible by $3^{i-1}$. Therefore, \eqref{OPT3i:3n+2}, \eqref{Mixed:KSV} and \eqref{Mixed:KSV2} together give \eqref{KSV:mod3:Cong1}. \qed

\section{Proof of Theorems \ref{thm:mf-1}}\label{sec:mf}
    From \cite[Eq. (44)]{SSS} we have
    \[
    \sum_{n\geq 0}\opt_3(8n+1)q^n\equiv 6f_1f_2 \pmod 8.
    \]
    This implies
    \begin{equation}\label{eq:ass}
    \sum_{n\geq 0}\opt_3(8n+1)q^{8n+1}\equiv 6\eta(8z)\eta(16z) \pmod 8.
    \end{equation}
    Let $\eta(8z)\eta(16z):=\sum_{n=1}^\infty a(n)q^n$, then $a(n)=0$ if $n\not\equiv 1\pmod 8$ for all $n\geq 0$. This implies
    \begin{equation}
        \opt_3(8n+1)\equiv 6 a(8n+1) \pmod 8.
    \end{equation}
    From Theorem \ref{thm:ono1} we have $\eta(8z)\eta(16z)\in S_1(\Gamma_0(128), \chi_1)$, where $\chi_1$ is a Nebentypus character and is given by $\chi_1(\tikz\draw[black,fill=black] (0,0) circle (.5ex);)=\left(\frac{-1\cdot2^7}{\tikz\draw[black,fill=black] (0,0) circle (.5ex);}\right)$. 
    
    Since $\eta(8z)\eta(16z)$ is a Hecke eigenform (see \cite[pp. 4854]{Martin}), so equation \eqref{hecke} implies
    \[
    \eta(8z)\eta(16z)|T_p=\sum_{n=1}^\infty \left(a(pn)+\chi_1(p)a\left(\frac{n}{p}\right)\right)q^n=\lambda(p)\sum_{n=1}^\infty a(n)q^n.
    \]
    This implies
    \begin{equation}\label{eq:as}
        a(pn)+\chi_1(p)a\left(\frac{n}{p}\right)=\lambda(p)a(n).
    \end{equation}
    Putting $n=1$ we note that $a(1)=1$, so $\lambda(p)=a(p)$ and since $a(p)=0$ for all $p\not\equiv 1 \pmod 8$ we have $\lambda(p)=0$.
    From \eqref{eq:as} we obtain,
    \begin{equation}\label{eq:b}
        a(pn)+\chi_1(p)a\left(\frac{n}{p}\right)=0.
    \end{equation}
    In equation \eqref{eq:b} setting $n=pn+r$ we obtain, for all $n\geq 0$ and $p\nmid r$,
    \begin{equation}\label{eq:44}
        a(p^2n+pr)=0,
    \end{equation}
    and replacing $n$ by $pn$ in \eqref{eq:b} we obtain
    \begin{equation}\label{eq:45}
        a(p^2n)\equiv -\chi_1(p)a(n) \pmod 4.
    \end{equation}
    Let $A(n):=a(8n+1)$, and let $p$ be a prime such that $p\not\equiv 1\pmod 8$. Replacing $n$ by $8n-pr+1$ in \eqref{eq:44} we obtain
    \begin{equation}\label{eq:46}
        A\left(p^2n+\frac{p^2-1}{8}+pr\frac{1-p^2}{8}\right)=0.
    \end{equation}
    Setting $n=8n+1$ in \eqref{eq:45}, we have
    \begin{equation}\label{eq:47}
   A\left(p^2n+\frac{p^2-1}{8}\right)\equiv -\chi_1(p)A(n) \pmod 4.     
    \end{equation}
    Since $p\geq 3$ is a prime, so $8|(1-p^2)$ and $\gcd\left(\frac{1-p^2}{8},p\right)=1$. So, $r$ runs over a residue system excluding the multiples of $p$, and so does $\frac{1-p^2}{8}r$. We can rewrite \eqref{eq:46} as
    \begin{equation}\label{eq:48}
                A\left(p^2n+\frac{p^2-1}{8}+pj\right)\equiv0 \pmod 4,
    \end{equation}
    where $p\nmid j$.

    For primes $p_i\geq 3$ such that $p_i\not\equiv 1 \pmod 8$ we can use \eqref{eq:47} repeatedly to obtain
    \begin{equation}\label{eq:49}
           A\left(p_1^2p_2^2\ldots p_{k}^2n+\frac{p_1^2p_2^2\ldots p_{k}^2-1}{8}\right)\equiv (-1)^k\chi_1(p_1)\chi_1(p_2)\cdots\chi_1(p_{k})A(n) \pmod 4. 
    \end{equation}
This needs the observation
\[
p_1^2p_2^2\ldots p_k^2n+\frac{p_1^2p_2^2\ldots p_k^2-1}{8}=p_1^2\left(p_2^2\ldots p_k^2n+\frac{p_2^2\ldots p_k^2-1}{8}\right)+\frac{p_1^2-1}{8}.
\]
Let $j\not\equiv 0\pmod{p_{k+1}}$, then \eqref{eq:48} and \eqref{eq:49} gives us
\begin{equation}\label{eq:asss}
     A\left(p_1^2p_2^2\ldots p_{k+1}^2n+\frac{p_1^2p_2^2\ldots p_{k+1}^2-1}{8}+p_1^2p_2^2\ldots p_k^2p_{k+1}j\right)\equiv 0 \pmod 4. 
\end{equation}
Finally, equations \eqref{eq:asss} and \eqref{eq:ass} gives us the desired result.

\section{Proof of Theorem \ref{thm:m2}}\label{proof:density}
\textbf{Case (i):} When $p=2$.

Recall
\begin{equation}\label{eq:1}
    \sum_{n\geq 0}\opt_3(n)q^n=\frac{f_2^9}{f_1^6f_4^3}.
\end{equation}
Rewriting this in terms of eta-quotients we have
\begin{equation}\label{eq:2}
     \sum_{n\geq 0}\opt_3(n)q^{24n}=\frac{\eta^9(48z)}{\eta^6(24z)\eta^3(96z)}.
\end{equation}
Let
\[
A_2(z)=\frac{\eta^{2}(24z)}{\eta(48z)}.
\]
By the binomial theorem, we have
\[
A_2^{2^k}(z)\equiv 1 \pmod{2^{k+1}}.
\]
Now, define
\[
C_{2,k}(z):=\frac{\eta^9(48z)}{\eta^6(24z)\eta^3(96z)}A_2^{2^k}(z)=\frac{\eta^{2^{k+1}-6}(24z)}{\eta^{2^k-9}(48z)\eta^3(96z)}.
\]
By equation \eqref{eq:1} and \eqref{eq:2} we have
\begin{equation}\label{eq:a}
    C_{2,k}(z)\equiv \sum_{n\geq 0}\opt_3(n)q^{24n} \pmod{2^{k+1}}
\end{equation}
By Theorem \ref{thm:ono1} $C_{2,k}(z)$ is a form of weight $2^{k-1}$ on $\Gamma_0(768)$. The cusps of $\Gamma_0(768)$ are represented by fractions $\dfrac{c}{d}$, where $d|768$ and $\gcd(c,d)=1$. $C_{2,k}(z)$ is holomorphic at a cusp $\dfrac{c}{d}$ if and only if
\begin{equation}\label{eq:3}
    \frac{\gcd(d,24)^2}{24}(2^{k+1}-6)+    \frac{\gcd(d,48)^2}{48}(9-2^k)-    3\frac{\gcd(d,96)^2}{96}\geq 0.
\end{equation}

Using Mathematica, we verify the inequality \eqref{eq:3} for all divisors of $768$. Inequality \eqref{eq:3} holds true for $k\geq3$. So, by Theorem \ref{thm:ono1}, $C_k(z)\in M_{2^{k-1}}\left(\Gamma_0(768),\left(\dfrac{2^{2^{k+1}+6}\cdot3^{2^k}}{\tikz\draw[black,fill=black] (0,0) circle (.5ex);}\right)\right)$ for $k\geq3$. By Theorem \ref{thm:ono2}, the Fourier coefficients of $C_k(z)$ are almost always divisible by $m=2^{k}$ for $k\geq3$. By \eqref{eq:a}, the same is true for $\opt_3(n)$ and thus $\opt_3(n)$ is dense for $2^k$ for $k\geq3$. But, if any integer is divisible by $2^k$ for $k\geq3$, then it is also divisible by 4 and 2 and hence the result follows for $p=2$.





\textbf{Case (ii):} $p\geq5$

  Define 
  \[A_3(z):=\frac{\eta(24z)}{\eta(24pz)}.
  \]
  Working modulo $p^k$, we have  
  \[
    A_3^{p^k}(z)=\frac{\eta^{p^{k+1}}(24z)}{\eta^{p^k}(24pz)}\equiv 1 \pmod{p^{k+1}},
    \]
    and
    \[
    C_{p,k}(z)=\frac{\eta^9(48z)}{\eta^6(24z)\eta^3(96z)}A^{p^k}(z).
    \]
    Modulo $p^{k}$, we have
    \[
    C_{p,k}(z)\equiv \frac{\eta^9(48z)}{\eta^6(24z)\eta^3(96z)}\equiv \sum_{n\geq 0}\opt_3(n)q^{24n}\pmod{p^k}.
    \]
    The weight of the eta-quotient $C_{p,k}(z)$ is $\frac{p^k}{2}(p-1)$. Let the level of the eta-quotient be $96up$, where $u$ is the smallest integer satisfying
    \[
    (p^{k+1}-6)\frac{96u}{24}+ 9\frac{96u}{48}-3\frac{96u}{96}-p^k\frac{96u}{24p}\equiv 0 \pmod{24}.
    \]
    This is equivalent to
    \begin{equation}\label{eq:32}
      ( 4p^{k-1}(p^{2}-1)-9)u\equiv 0 \pmod{24}.
    \end{equation}
    For all primes $p\geq 5$ we have $3|p^{2}-1$, so from \eqref{eq:32} we can conclude that $u=8$.

    By Theorem \ref{thm:ono1}, the cusps of $\Gamma_0(768p)$ are given by $\dfrac{c}{d}$ where $d\mid768p$ and $\gcd(c,d)=1$. So, $C_{p,z}(z)$ is holomorphic at a cusp $\dfrac{c}{d}$ if and only if
    \begin{equation}\label{eq:33}
        (p^{k+1}-6)\frac{\gcd(d,24)^2}{24}+9\frac{\gcd(d,48)^2}{48}-3\frac{\gcd(d,96)^2}{96}-p^k\frac{\gcd(d,24p)^2}{24p}\geq 0.
    \end{equation}
    We need to check whether \eqref{eq:33} is true for all $d|768p$. We verify this with the help of the following table.
    \begin{center}
	{\setlength{\extrarowheight}{18pt}\begin{tabular}{|c|c|c|c|c|}
	\hline 
        $d$ & $\dfrac{\gcd(d,24)^2}{24}$ &$\dfrac{\gcd(d,48)^2}{48}$&$\dfrac{\gcd(d,96)^2}{96}$&$\dfrac{\gcd(d,24p)^2}{24p}$\\
        \hline
        $2^r\cdot 3^s$,where $0\leq r
        \leq 3$ and $0\leq s
        \leq 1$ & $\dfrac{d^2}{24}$ & $\dfrac{d^2}{48}$ & $\dfrac{d^2}{96}$ & $\dfrac{d^2}{24p}$\\
         \hline
         $2^r$,where $5\leq r
        \leq 8$ & $\dfrac{8}{3}$ & $\dfrac{16}{3}$ & $\dfrac{32}{3}$ & $\dfrac{8}{3p}$\\
       \hline
        $2^r\cdot3$,where $5\leq r
        \leq 8$ & $24$ & $48$ & $96$ & $\dfrac{24}{p}$\\
         \hline
          $16$ & $\dfrac{8}{3}$ & $\dfrac{16}{3}$ & $\dfrac{8}{3}$ & $\dfrac{8}{3p}$\\
         \hline
         $48$ & $24$ & $48$ & $24$ & $\dfrac{24}{p}$\\
         \hline
        $2^r\cdot3^s\cdot p$,where $0\leq r
        \leq 3$ and $0\leq s
        \leq 1$ & $\dfrac{d^2}{24p}$ & $\dfrac{d^2}{48p}$ & $\dfrac{d^2}{96p}$ & $\dfrac{d^2}{24p}$\\
         \hline
         $2^r\cdot p$,where $5\leq r
        \leq 8$ & $\dfrac{8}{3}$ & $\dfrac{16}{3}$ & $\dfrac{32}{3}$ & $\dfrac{8p}{3}$\\
         \hline
        $2^r\cdot 3 \cdot p$,where $5\leq r
        \leq 8$ & $24$ & $48$ & $96$ & $24p$\\
         \hline
          $16\cdot p$ & $\dfrac{8}{3}$ & $\dfrac{16}{3}$ & $\dfrac{8}{3}$ & $\dfrac{8p}{3}$\\
         \hline
         $48\cdot p$ & $24$ & $48$ & $24$ & $24p$\\
         \hline
	\end{tabular}}
\end{center}

\noindent Now, using Theorems \ref{thm:ono1} and \ref{thm:ono2} we are done. 

\section{Concluding Remarks}\label{sec:conc}

\begin{enumerate}
\item Theorem \ref{thm:equality} looks ripe for a combinatorial proof. We leave this as an open problem.
\item Can we extend Theorem \ref{singlemodcong} to the $\opt_{2k+1}(n)$ function?
    \item There are other candidates for a result like Theorem \ref{thm:mf-1}. For instance, using some of the identities in the proof of Theorem 10 in \cite{SSS} there might be some results for $\opt_4(n)$.
    \item Based on strong numerical evidence, we pose the following conjectures.
\begin{conjecture}
    For all $i,j\geq 1$ and $k$ not a multiple of $2$ or $3$, we have
\[
    \opt_{3^i\cdot 2^j\cdot k}(3n+2)\equiv 0\pmod {3^{i+1}\cdot 2^{j+2}}.
\]
\end{conjecture}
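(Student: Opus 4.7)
The plan is to split the modulus $3^{i+1}\cdot 2^{j+2}$ into its coprime prime-power parts via the Chinese Remainder Theorem and attack each independently. This reduces the conjecture to the pair of congruences
\[
\opt_{3^i\cdot 2^j\cdot k}(3n+2)\equiv 0\pmod{3^{i+1}}
\qquad\text{and}\qquad
\opt_{3^i\cdot 2^j\cdot k}(3n+2)\equiv 0\pmod{2^{j+2}}.
\]

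For the $2$-adic half I would write $r:=3^i k$, which is odd since $k$ is coprime to $6$; then $\opt_{3^i\cdot 2^j\cdot k}=\opt_{2^j\cdot r}$ falls inside the scope of Lemma \ref{lem:new}. The arithmetic key is that $3n+2\equiv 2\pmod 3$, while every perfect square is $\equiv 0$ or $1\pmod 3$, so $3n+2$ is never a perfect square. A careful accounting of multiplicities in the inductive step of Lemma \ref{lem:new} should then show that the apparent ``twice a square'' and ``four times a square'' exceptional contributions also vanish modulo $2^{j+2}$, once one tracks the cancellations among cross-terms produced by $(\sum q^{n^2})^2$ and $(\sum q^{2n^2})^2$.

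For the $3$-adic half, the plan is to extend the trinomial analysis from the proof of Theorem \ref{thm:KSV:mod3}. First, a parallel argument at the residue $3n+1$ should establish the companion congruence
\[
\opt_{3^i}(3n+1)\equiv 0\pmod{3^i},
\]
by extracting the $q^{3n+1}$ terms from the product expansion used in Section \ref{sec:KSV:mod3} and invoking Lemma \ref{modified-ksv} to verify that every contributing multinomial coefficient is divisible by $3^i$. Combined with Theorem \ref{thm:KSV:mod3}, this yields the refined $3$-dissection
\[
\sum_{n\geq 0}\opt_{3^i}(n)q^n \;=\; A(q^3)\,+\,3^{i}q\,\widetilde B(q^3)\,+\,3^{i+1}q^2 D(q^3).
\]
Raising to the $2^j k$-th power and expanding by the trinomial theorem, the $q^{3n+2}$-coefficient collects contributions indexed by triples $(a,b,c)$ with $a+b+c=2^j k$ and $b+2c\equiv 2\pmod 3$. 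Any term with $c\geq 1$ carries the factor $3^{(i+1)c}\geq 3^{i+1}$; any term with $c=0$ forces $b\equiv 2\pmod 3$, hence $b\geq 2$, and the factor $3^{ib}\geq 3^{2i}\geq 3^{i+1}$ (using $i\geq 1$) takes over.

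The principal obstacle is proving the companion congruence $\opt_{3^i}(3n+1)\equiv 0\pmod{3^i}$. At the $q^{3n+1}$ level, the geometric powers of $3$ supplied by the factors $3q$ and $9q^2$ in the Borwein-style dissections are systematically one smaller than at the $q^{3n+2}$ level, so the power of $3$ must be squeezed out of the multinomial coefficients $\binom{3^{i-1}}{\cdot}$ via a more delicate application of Lemma \ref{modified-ksv}. A secondary subtlety will be making the refinement of Lemma \ref{lem:new} fully rigorous for the full class of odd $r$ of the form $3^i k$ admitted by the conjecture, since we need the exceptional cases to collapse rather than just be identified.
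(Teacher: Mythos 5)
This statement is posed in Section \ref{sec:conc} as an open conjecture supported by numerical evidence; the paper offers no proof, only the remark that several cases are accessible by the Radu--Sellers technique of Theorem \ref{thm:radu}. So there is no paper argument to compare against. The question is whether your outline would close the conjecture, and at present it does not, for two reasons.

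First, the $2$-adic leg cannot be run off Lemma \ref{lem:new} as stated, because that lemma is in fact false as written. Taking $m=r=1$ and $n=2$, direct convolution gives
$\opt_2(2)=\opt_1(0)\opt_1(2)+\opt_1(1)^2+\opt_1(2)\opt_1(0)=2+4+2=8\equiv 0\pmod 8$,
whereas the lemma predicts $\opt_2(2)\equiv 4\pmod 8$ since $2=2\cdot 1^2$ is twice a square. The slip is already in the paper's base-case simplification: the two copies of $4\sum_{n\ge1}q^{2n^2}$ (one from $2\sum\opt_1(2n^2)q^{2n^2}$, one from the diagonal of $(\sum\opt_1(n^2)q^{n^2})^2$) sum to $8\sum q^{2n^2}\equiv 0\pmod 8$, and the $q^{4k^2}$ sum likewise cancels against the even-index subsequence of $\sum q^{m^2}$, leaving
\[
\sum_{n\ge0}\opt_{2r}(n)q^n\equiv 1+4\sum_{\substack{m\ge1\\ m\ \text{odd}}}q^{m^2}\pmod 8.
\]
The same induction then gives $\opt_{2^mr}(n)\equiv 2^{m+1}\pmod{2^{m+2}}$ precisely when $n$ is an odd square and $\equiv 0\pmod{2^{m+2}}$ otherwise. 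With this correction your $2$-adic half becomes immediate, since $3n+2\equiv 2\pmod 3$ is never a square; but note that the lemma in its printed form would actually \emph{contradict} the conjecture whenever $3n+2$ is twice a square, so the ``careful accounting of multiplicities'' you allude to is not a refinement you may defer --- it is the repair of an error, and it must be made explicit.

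Second, the $3$-adic leg hinges entirely on the companion congruence
\[
\opt_{3^i}(3n+1)\equiv 0\pmod{3^i},
\]
which is new and is not a corollary of Theorem \ref{thm:KSV:mod3}. Your downstream trinomial bookkeeping is correct (any surviving triple $(a,b,c)$ has $c\ge 1$, contributing $3^{i+1}$, or $c=0$ with $b\equiv 2\pmod 3$ so $b\ge 2$ and $3^{ib}\ge 3^{2i}\ge 3^{i+1}$), but it presupposes this lemma. Establishing it requires rerunning the Section \ref{sec:KSV:mod3} analysis for the tuples $\alpha+\beta+2\gamma\equiv 1\pmod 3$; the extremal cases $(t,\alpha,\beta,\gamma,m,r)=(0,1,0,0,0,0)$ and $(0,0,1,0,0,0)$ yield exactly $3^{1}\cdot 3^{i-1}=3^i$ with no slack, so the Lemma \ref{modified-ksv} and multinomial-valuation case analysis must be carried through in full, not merely flagged as the ``principal obstacle.'' Until both of these pieces are actually done, the proposal is a sound programme but not a proof.
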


\begin{conjecture}
    For all $i\geq 1$ and $j$ not a power of $2$, nor a multiple of $2$ or divisible by $3$, we have
\begin{align*}
  \opt_{3^i\cdot j}(3n+2)&\equiv 0\pmod {3^{i+1}\cdot 2}.
\end{align*}
\end{conjecture}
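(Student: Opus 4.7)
The plan is to reduce the conjecture to Theorem~\ref{thm:KSV:mod3}, a companion congruence for the $3n+1$ residue class, and the elementary mod~$2$ observation used throughout the paper, by exploiting the multiplicative factorization
\[
\sum_{n\geq 0}\opt_{3^i\cdot j}(n)\,q^n = \left(\sum_{n\geq 0}\opt_{3^i}(n)\,q^n\right)^{j}.
\]
First I would prove the auxiliary statement
\[
\opt_{3^i}(3n+1)\equiv 0\pmod{3^i}\qquad\text{for all } i\geq 1,\, n\geq 0,
\]
which strengthens the congruence in \eqref{ds-1}. The cleanest route is the classical lifting $f_k^{3^i}\equiv f_{3k}^{3^{i-1}}\pmod{3^i}$, provable by induction on $i$ from $(1-q)^3\equiv 1-q^3\pmod 3$. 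Applying this to each factor of $f_2^{3\cdot 3^i}/(f_1^{2\cdot 3^i}f_4^{3^i})$ and using the fact that every power series with constant term $1$ is invertible modulo $3^i$, one obtains
\[
\sum_{n\geq 0}\opt_{3^i}(n)\,q^n \equiv \sum_{n\geq 0}\opt_{3^{i-1}}(n)\,q^{3n}\pmod{3^i},
\]
so every coefficient of the left-hand side indexed by an integer not divisible by $3$ vanishes modulo $3^i$.

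Next, I would write the $3$-dissection
\[
\sum_{n\geq 0}\opt_{3^i}(n)\,q^n = F_0(q^3)+qF_1(q^3)+q^2F_2(q^3).
\]
Theorem~\ref{thm:KSV:mod3} gives $F_2\equiv 0\pmod{3^{i+1}}$, and the previous step gives $F_1\equiv 0\pmod{3^i}$. Raising to the $j$-th power via the multinomial theorem,
\[
\sum_{n\geq 0}\opt_{3^i\cdot j}(n)\,q^n = \sum_{a+b+c=j}\binom{j}{a,b,c}\,q^{b+2c}\,F_0^{a}F_1^{b}F_2^{c},
\]
so the contribution to $q^{3n+2}$ is the subsum over triples $(a,b,c)$ with $b+2c\equiv 2\pmod 3$. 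If $c\geq 1$, then $F_2^{c}$ is divisible by $3^{(i+1)c}\geq 3^{i+1}$; otherwise $c=0$ and $b\equiv 2\pmod 3$, so $b\geq 2$ and $F_1^{b}$ is divisible by $3^{ib}\geq 3^{2i}\geq 3^{i+1}$ for every $i\geq 1$. Every surviving term is therefore divisible by $3^{i+1}$, giving $\opt_{3^i\cdot j}(3n+2)\equiv 0\pmod{3^{i+1}}$.

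Finally, the factor of $2$ comes for free: the reductions $f_1^2\equiv f_2$ and $f_2^2\equiv f_4\pmod 2$ used to derive~\eqref{n2} show that $\sum_n\opt_k(n)q^n\equiv 1\pmod 2$ for every $k\geq 1$, hence $\opt_{3^i\cdot j}(3n+2)\equiv 0\pmod 2$, and the Chinese Remainder Theorem delivers the full modulus $3^{i+1}\cdot 2$. The main obstacle is Step~1: although the lifting congruence is classical, one needs to verify carefully that the inversion of the denominators $(f_1^{3^i})^2$ and $f_4^{3^i}$ modulo $3^i$ produces a bona fide identity of formal power series. I also note that this argument in fact yields the congruence for \emph{every} $j\geq 1$; the parity and coprimality hypotheses in the statement serve only to separate this case from the situations handled by the preceding conjecture (even $j$, which should carry stronger $2$-adic divisibility) or by Theorem~\ref{thm:KSV:mod3} with $i$ replaced by $i+1$ (the case $3\mid j$).
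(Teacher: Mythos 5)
This statement is posed in the paper only as an open conjecture (the fourth conjecture in Section~\ref{sec:conc}); the authors offer no proof and merely remark that ``several cases'' can be verified with Radu's algorithm. Your argument, by contrast, appears to be a complete proof, and one valid for \emph{every} $j\geq 1$ rather than only those in the conjecture's hypothesis. The key ingredients all check out. The lifting $f_k^{3^i}\equiv f_{3k}^{3^{i-1}}\pmod{3^i}$ follows by iterating the standard fact that $A\equiv B\pmod{3^m}$ implies $A^3\equiv B^3\pmod{3^{m+1}}$, starting from $f_1^3\equiv f_3\pmod 3$; applying this to numerator and denominator of $f_2^{3\cdot 3^i}/(f_1^{2\cdot 3^i}f_4^{3^i})$ and using the invertibility of power series with constant term $1$ gives $\sum_n\opt_{3^i}(n)q^n\equiv\sum_n\opt_{3^{i-1}}(n)q^{3n}\pmod{3^i}$, whence $F_1\equiv 0\pmod{3^i}$. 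Together with $F_2\equiv 0\pmod{3^{i+1}}$ from Theorem~\ref{thm:KSV:mod3}, the multinomial expansion of $\bigl(F_0(q^3)+qF_1(q^3)+q^2F_2(q^3)\bigr)^j$ gives terms indexed by $(a,b,c)$ with $a+b+c=j$; those landing on $q^{3n+2}$ satisfy $b+2c\equiv 2\pmod 3$, and the case split ($c\geq 1$ gives at least $3^{i+1}$; $c=0$ forces $b\geq 2$ so $3^{ib}\geq 3^{2i}\geq 3^{i+1}$) is correct. The factor of $2$ is indeed free, since $f_1^{2k}\equiv f_2^k$ and $f_2^{2k}\equiv f_4^k\pmod 2$ show $\sum_n\opt_k(n)q^n\equiv 1\pmod 2$ for every $k$. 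You have, in effect, upgraded a conjecture to a theorem using only Theorem~\ref{thm:KSV:mod3} and elementary $3$-adic bookkeeping; the same computation with $b+2c\equiv1\pmod3$ also disposes of the paper's third conjecture in that section, and you might state that explicitly.
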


\begin{conjecture}
    For all $i,j\geq 1$ and $k$ not a multiple of $2$ or $3$, we have
\[
    \opt_{3^i\cdot 2^j\cdot k}(3n+1)\equiv 0\pmod {3^{i}\cdot 2^{j+1}}.
\]
\end{conjecture}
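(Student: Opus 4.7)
The plan is to split the conjectured divisibility into $2$-adic and $3$-adic parts via the Chinese Remainder Theorem. The $2$-adic half is immediate: writing the index as $2^j\cdot(3^ik)$, the cofactor $3^ik$ is odd, so Theorem~\ref{thm:k2} yields
$\opt_{3^i\cdot 2^j\cdot k}(n)\equiv 0\pmod{2^{j+1}}$
for every $n\geq 0$, in particular on the arithmetic progression $3n+1$.

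For the $3$-adic half the key step would be an auxiliary congruence $\opt_{3^i}(3n+1)\equiv 0\pmod{3^{i}}$, provable by mimicking the proof of Theorem~\ref{thm:KSV:mod3}: starting from $f_{-1}^{3^i}/f_1^{3^i}$, invoking the $3$-dissections \eqref{3df1^3} and \eqref{3d1/f1^3}, and extracting $q^{3n+1}$ reduces the problem to a sum over tuples $(\alpha,\beta,\gamma)\in\{0,1,2\}^3$ with $\alpha+\beta+2\gamma\equiv 1\pmod 3$. For each of the nine such tuples, Lemma~\ref{modified-ksv} and direct inspection show that the associated coefficient
$3^{3t+3m+6r+\alpha+\beta+2\gamma}\binom{3^{i-1}}{3t+\alpha}\binom{3^{i-1}}{\ell,3m+\beta,3r+\gamma}$
has $3$-adic valuation at least $i$; the tightest cases, $(\alpha,\beta,\gamma)=(1,0,0)$ and $(0,1,0)$ at $t=m=r=0$, attain exactly $3^i$ by coupling the single explicit factor of $3$ with $\binom{3^{i-1}}{1}=3^{i-1}$.

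Given this auxiliary claim and Theorem~\ref{thm:KSV:mod3}, one can write
\[
\sum_{n\geq 0}\opt_{3^i}(n)\,q^n=C_0(q^3)+3^iq\,D_1(q^3)+3^{i+1}q^2\,D_2(q^3)
\]
for some $C_0,D_1,D_2\in\mathbb{Z}[[q]]$. Since $\sum_n\opt_k(n)q^n=(f_2^3/(f_1^2f_4))^k$ is a multiplicative power of the index, $\sum_n\opt_{3^i\cdot 2^j\cdot k}(n)q^n=\bigl(\sum_n\opt_{3^i}(n)q^n\bigr)^{2^jk}$. Multinomially expanding this power, every term containing at least one factor of $3^iq\,D_1(q^3)$ or $3^{i+1}q^2\,D_2(q^3)$ has $3$-adic valuation $\geq i$, so modulo $3^i$ it collapses to $C_0(q^3)^{2^jk}$, which is supported on $q^{3r}$. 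Hence $\opt_{3^i\cdot 2^j\cdot k}(3n+1)\equiv 0\pmod{3^i}$, and the CRT combines this with the $2^{j+1}$ half to deliver the conjectured congruence. The main obstacle is the auxiliary claim: the residue class $3n+1$ yields a different list of surviving tuples than $3n+2$, and with the target modulus dropping from $3^{i+1}$ to $3^{i}$ the $3$-adic slack is minimal, so one must check each of the nine tuples (particularly the two extremal ones) carefully.
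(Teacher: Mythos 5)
Your target is Conjecture~3 in Section~\ref{sec:conc}: the paper does \emph{not} prove it, noting only that ``several cases'' can be verified numerically via the Radu-algorithm route of Theorem~\ref{thm:radu}. So there is no paper proof to compare against. Your proposal, however, sketches an actual proof of the full conjecture, and the argument looks sound. The $2$-adic half is indeed immediate from Theorem~\ref{thm:k2} with odd cofactor $3^i k$ (that theorem's ``$n\geq 0$'' should clearly read ``$n\geq 1$'' since $\opt_k(0)=1$, but $3n+1\geq1$ so this is harmless). For the $3$-adic half, the auxiliary congruence $\opt_{3^i}(3n+1)\equiv0\pmod{3^i}$ does go through by rerunning the proof of Theorem~\ref{thm:KSV:mod3} with the residue class $1\pmod 3$ in place of $2$: the nine tuples $(\alpha,\beta,\gamma)$ with $\alpha+\beta+2\gamma\equiv1\pmod3$ all yield $3$-adic valuation at least $i$, and you have correctly singled out the two extremal ones, $(1,0,0)$ and $(0,1,0)$ at $t=m=r=0$, each attaining exactly $3\cdot 3^{i-1}=3^i$; Lemma~\ref{modified-ksv} takes care of all the remaining tuples and all $t\geq1$. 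Your reduction step --- writing $\sum_n\opt_{3^i}(n)q^n=C_0(q^3)+3^iqD_1(q^3)+3^{i+1}q^2D_2(q^3)$ (which requires both the auxiliary congruence and Theorem~\ref{thm:KSV:mod3}), raising to the $2^jk$-th power, and observing that modulo $3^i$ the multinomial expansion collapses to $C_0^{2^jk}$, which is supported on exponents divisible by~$3$ --- is correct and in fact shows $\opt_{3^i m}(3n+1)\equiv0\pmod{3^i}$ for \emph{every} multiplier $m\ge1$. Combining the two halves by the Chinese remainder theorem gives the conjectured modulus $3^i\cdot2^{j+1}$. If you write out the nine-tuple verification for the $q^{3n+1}$ extraction explicitly (paralleling \eqref{OPT3i:3n+2}--\eqref{Mixed:KSV2}), you will have a complete proof of a statement the paper only conjectures, which is a genuine contribution beyond what the paper establishes; the same decomposition would also deliver the $3$-adic part of the paper's fourth conjecture.
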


\begin{conjecture}
    For all $i\geq 1$ and $j$ not a power of $2$, nor a multiple of $2$ or divisible by $3$, we have
\begin{align*}
  \opt_{3^i\cdot j}(3n+1)&\equiv 0\pmod {3^{i}\cdot 2}.
\end{align*}
\end{conjecture}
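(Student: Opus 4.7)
The plan is to bootstrap from an analog of Theorem \ref{thm:KSV:mod3} for the residue class $3n+1$, namely
\[
\opt_{3^i}(3n+1) \equiv 0 \pmod{3^i} \quad \text{for all } i \geq 1,\ n \geq 0,
\]
and then pass to general odd $j$ coprime to $3$ via a convolution argument, finally picking up the factor of $2$ from the universal mod-$2$ vanishing of $\opt_k(n)$ for $n \geq 1$. For the first step I would mimic the proof of Theorem \ref{thm:KSV:mod3} almost verbatim: start from the same identity
\[
\sum_{n \geq 0} \opt_{3^i}(n)\,q^n = \bigl(a_{-3} f_{-3} + 3q f_{-9}^3\bigr)^{3^{i-1}} \left(a_3^2 \frac{f_9^3}{f_3^{10}} + 3 a_3 q \frac{f_9^6}{f_3^{11}} + 9 q^2 \frac{f_9^9}{f_3^{12}}\right)^{3^{i-1}},
\]
expand via the trinomial theorem, and extract the coefficient of $q^{3n+1}$. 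This singles out the nine index tuples $(\alpha, \beta, \gamma) \in \{0,1,2\}^3$ with $\alpha + \beta + 2\gamma \equiv 1 \pmod 3$, for each of which Lemma \ref{modified-ksv} combined with the explicit factor $3^{3t+3m+6r+\alpha+\beta+2\gamma}$ should deliver the required $3^i$-divisibility of the coefficient.

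Once the single-variable congruence is in hand, I would use the multiplicativity
\[
\sum_{n \geq 0} \opt_{3^i \cdot j}(n)\,q^n = \left(\sum_{n \geq 0} \opt_{3^i}(n)\,q^n\right)^j,
\]
to write
\[
\opt_{3^i \cdot j}(3n+1) = \sum_{n_1 + \cdots + n_j = 3n+1} \opt_{3^i}(n_1) \cdots \opt_{3^i}(n_j).
\]
Since $3n+1 \not\equiv 0 \pmod 3$, every summand must contain at least one index $n_k \not\equiv 0 \pmod 3$; for such an $n_k$, either Theorem \ref{thm:KSV:mod3} (when $n_k \equiv 2 \pmod 3$, giving $3^{i+1}$) or the first step (when $n_k \equiv 1 \pmod 3$, giving $3^i$) forces $\opt_{3^i}(n_k) \equiv 0 \pmod{3^i}$, whence each summand, and hence the whole sum, is divisible by $3^i$. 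The factor of $2$ is obtained by reducing the generating function $\tfrac{f_2^{3k}}{f_1^{2k} f_4^k}$ modulo $2$, which simplifies to $1$ and forces $\opt_k(n) \equiv 0 \pmod 2$ for all $k$ and $n \geq 1$; since $\gcd(3^i, 2) = 1$, the two congruences combine to the conjectured one.

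The main obstacle is the first step. Unlike the $3n+2$ case of Theorem \ref{thm:KSV:mod3}, whose extremal tuples already furnish explicit $3$-powers of at least $3^2$, here the extremal tuples $(1,0,0)$ and $(0,1,0)$ contribute only $3^1$ explicitly, so the remaining $3^{i-1}$ must be extracted from the binomial and multinomial coefficients via Lemma \ref{modified-ksv}. The bookkeeping should parallel the argument for Theorem \ref{thm:KSV:mod3} but one unit lower throughout, using the decomposition $\binom{3^{i-1}}{\ell,\, 3m+\beta,\, 3r+\gamma} = \binom{3^{i-1}}{\ell}\binom{3^{i-1}-\ell}{3m+\beta}$ and applying the lemma to each binomial factor; the corner cases where $t=0$ will need particular care to ensure that the $3$-adic valuation of the multinomial contributes exactly the missing $3^{i-1}$.
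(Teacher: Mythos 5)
The statement you were asked about is posed in the paper as an open conjecture, not a theorem; the authors offer no proof, only the remark that particular cases can be checked with Radu's algorithm. So there is no paper proof to compare against, and your proposal must be judged on its own merits.

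Your overall strategy is sound, and if completed it would prove the conjecture, indeed even for the excluded value $j=1$. The convolution step is correct: for any decomposition $3n+1=n_1+\cdots+n_j$ with all $n_k\ge 0$, some $n_{k_0}\not\equiv 0\pmod 3$, hence $n_{k_0}\ge 1$, and either Theorem~\ref{thm:KSV:mod3} or your Step~1 kills the factor $\opt_{3^i}(n_{k_0})$ modulo $3^i$. The mod-$2$ factor from $f_1^{2k}\equiv f_2^k\pmod 2$ is also fine. The bottleneck is where you say it is: establishing $\opt_{3^i}(3n+1)\equiv 0\pmod{3^i}$.

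There, the specific device you propose does not work as written. You split the multinomial as $\binom{3^{i-1}}{\ell,\,3m+\beta,\,3r+\gamma}=\binom{3^{i-1}}{\ell}\binom{3^{i-1}-\ell}{3m+\beta}$ and suggest applying Lemma~\ref{modified-ksv} to each factor, but that lemma only controls $\binom{3^{i-1}}{\ell}\cdot 3^{\ell}$, and the power $3^{\ell}$ is simply not present in the explicit factor $3^{3t+3m+6r+\alpha+\beta+2\gamma}$; worse, $\binom{3^{i-1}-\ell}{3m+\beta}$ is not of the form $\binom{3^m}{n}$, so the lemma says nothing about it. The repair is to peel off the binomial whose lower entry is forced coprime to $3$: when $\beta\neq 0$ use
\[
\binom{3^{i-1}}{\ell,\,3m+\beta,\,3r+\gamma}=\binom{3^{i-1}}{3m+\beta}\binom{3^{i-1}-(3m+\beta)}{\ell,\,3r+\gamma},
\]
pair $\binom{3^{i-1}}{3m+\beta}$ with $3^{3m+\beta}$ (a divisor of the available explicit $3$-power), and Lemma~\ref{modified-ksv} already yields divisibility by $3^{i+m}$. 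The analogous split on $3r+\gamma$ handles $\beta=0,\gamma\neq 0$; and when $\beta=\gamma=0$ the congruence $\alpha+\beta+2\gamma\equiv 1\pmod 3$ forces $\alpha=1$, so $\binom{3^{i-1}}{3t+1}\cdot 3^{3t+1}$ alone gives $3^{i}$ by the same lemma. Alternatively one can bypass Lemma~\ref{modified-ksv} entirely using Kummer's theorem, which gives $\nu_3\bigl(\binom{3^{i-1}}{k}\bigr)=i-1$ whenever $1\le k\le 3^{i-1}-1$ and $3\nmid k$. With this fix your Step~1 holds and the whole argument goes through.
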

\noindent It was communicated to us that these conjectures have been recently proven by Qi, Sang, Yao, and Zhou \cite{QiSangYaoZhou}. In addition, recently Keerthana, Ananya, and Ranganatha \cite{arxivnew} also announced a proof of these conjectures.
\end{enumerate}

\section*{Acknowledgements}

The authors are extremely grateful to Professor James Sellers. Discussions with Professor Sellers led the authors to the  development of Theorems \ref{thm:equality} and \ref{singlemodcong}. The second author was partially supported by a Start-Up Grant from Ahmedabad University (Reference No. URBSASI24A5). The authors also thank the anonymous referee for helpful suggestions.

\section*{Data availability}
Data sharing is not applicable to this article.

\section*{Declaration}
\textbf{Conflict of interest:} The authors declare that they have no conflict of interest.

\end{document}